\newtheorem{thm}{Theorem}[section]
\newtheorem{cor}[thm]{Corollary}
\newtheorem{con}[thm]{Conjecture}
\newtheorem{lem}[thm]{Lemma}
\numberwithin{equation}{section}
\journal{}
\begin{document}
\begin{spacing}{1.15}
\begin{frontmatter}
\title{\textbf{The multiplicity of the zero Laplacian eigenvalue of uniform hypertrees}}

\author{Ge Lin}\ead{linge0717@126.com}
\author{Changjiang Bu}\ead{buchangjiang@hrbeu.edu.cn}

\address{College of Mathematical Sciences, Harbin Engineering University, Harbin, PR China}

\begin{abstract}
In this paper, the Laplacian characteristic polynomial of uniform hypergraphs with cut vertices or pendant edges
and the Laplacian matching polynomial of uniform hypergraphs are characterized.
The multiplicity of the zero Laplacian eigenvalue of uniform hypertrees is given, which proves the conjecture in \cite{zheng2023zero} (The zero eigenvalue of the Laplacian tensor of a uniform hypergraph, Linear and Multilinear Algebra, (2023) Doi:10.1080/03081087.2023.2172541).
\end{abstract}

\begin{keyword} hypertree, Laplacian tensor, multiplicity, characteristic polynomial, matching polynomial \\
\emph{AMS classification(2020):}05C65, 05C50.
\end{keyword}

\end{frontmatter}

\section{Introduction}

A hypergraph is called $k$-uniform if its each edge contains exactly $k$ vertices.
For a $k$-uniform hypergraph $H=(V(H),E(H))$,
its adjacency tensor $\mathcal{A}_{H}=(a_{i_{1}i_{2}\cdots i_{k}})$ is a $k$-order $|V(H)|$-dimensional tensor \cite{cooper2012spectra},
where
\begin{equation*}
a_{i_{1}i_{2}\cdots i_{k}}=\begin{cases}
\frac{1}{(k-1)!}&\text{if $\{i_{1},i_{2},\ldots,i_{k}\}\in E(H)$},\\
0&\text{otherwise}.
\end{cases}
\end{equation*}
The Laplacian tensor of $H$ is $\mathcal{L}_{H}=\mathcal{D}_{H}-\mathcal{A}_{H}$ \cite{qi2014h},
where $\mathcal{D}_{H}$ is the diagonal tensor of vertex degrees of $H$.
The eigenvalues of $\mathcal{A}_{H}$ and $\mathcal{L}_{H}$ are called the eigenvalues and Laplacian eigenvalues of $H$, respectively.
The characteristic polynomial of $\mathcal{A}_{H}$ and $\mathcal{L}_{H}$ are called the characteristic polynomial and the Laplacian characteristic polynomial of $H$, respectively.

The characteristic polynomials of uniform hypergraphs are a research area that has attached much attention in spectral hypergraph theory.
In 2012, Cooper and Dutle \cite{cooper2012spectra} characterized some properties on the characteristic polynomials of uniform hypergraphs
and gave the characteristic polynomial of the one-edge hypergraph.
In 2015, Cooper and Dutle \cite{cooper2015computing} gave the characteristic polynomial of the $3$-uniform hyperstar.
In 2020, Bao et al. \cite{bao2020combinatorial} provided a combinatorial method for computing the characteristic polynomial of uniform hypergraphs with cut vertices, and gave the characteristic polynomial of the $k$-uniform hyperstar.
In 2021, Chen and Bu \cite{chen2021reduction} gave a reduction formula for the characteristic polynomial of uniform hypergraphs with pendant edges.
Besides, they used the reduction formula to derive the characteristic polynomial of the uniform hyperpath.

However, there are few results on the Laplacian characteristic polynomials of uniform hypergraphs.
In 2023, Zheng \cite{zheng2023zero} gave the Laplacian characteristic polynomial of uniform hyperstar,
and obtained the multiplicity of the zero Laplacian eigenvalue of uniform hyperstar and hyperpath.
Moreover, the following conjecture was proposed in \cite{zheng2023zero}.

\begin{con}\cite{zheng2023zero}\label{caixiang1.1}
Let $T=(V(T),E(T))$ be a $k$-uniform hypertree for $k\geq3$.
Then the multiplicity of the zero Laplacian eigenvalue of $T$ is $k^{|E(T)|(k-2)}$.
\end{con}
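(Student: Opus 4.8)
The plan is to prove the statement by induction on the number of edges $m=|E(T)|$, peeling off one pendant edge at a time via the reduction formula for the Laplacian characteristic polynomial of a uniform hypergraph with a pendant edge established above. Write $\Phi_{\mathcal L}(H,\mu)$ for the Laplacian characteristic polynomial of a hypergraph $H$ and $\mathrm{mult}_0(H)$ for the multiplicity of $0$ as its root (recall $|V(T)|=m(k-1)+1$). A $k$-uniform hypertree with $m\ge2$ edges always contains a pendant edge $e=\{v,u_1,\dots,u_{k-1}\}$ with $d(u_1)=\cdots=d(u_{k-1})=1$; removing $e$ and the isolated vertices $u_i$ leaves a $k$-uniform hypertree $T_1$ on $m-1$ edges that still contains $v$. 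Granting the inductive hypothesis $\mathrm{mult}_0(T_1)=k^{(m-1)(k-2)}$, the theorem reduces to the base case $m=1$ plus a peeling lemma: $\mathrm{mult}_0(T)=k^{\,k-2}\cdot\mathrm{mult}_0(T_1)$.

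For $m=1$ the hypergraph is a single hyperedge $K_k^{(k)}$; I would obtain $\Phi_{\mathcal L}(K_k^{(k)},\mu)$ directly from the eigenvalue equations $(1-\mu)x_i^{k-1}=\prod_{j\ne i}x_j$ $(i=1,\dots,k)$, in the spirit of Cooper and Dutle's one-edge computation, and read off that the $\mu=0$ locus, cut out by $x_i^{k-1}=\prod_{j\ne i}x_j$, contributes order of vanishing exactly $k^{k-2}=k^{1\cdot(k-2)}$ after a Bézout/resultant count; this instance is in any case already contained in the hyperstar result of \cite{zheng2023zero}.

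For the peeling lemma, apply the pendant-edge reduction formula at $v$. Eliminating the $k-1$ degree-one variables $x_{u_i}$ decomposes the solution set of the eigenvalue system into a generic branch — on which $x_{u_i}=\zeta_i\,x_v/(1-\mu)$ for $k$-th roots of unity $\zeta_i$ with $\prod_i\zeta_i=1$, producing $k^{k-2}$ identical copies of the reduced system with $v$ carrying the $\mu$-dependent effective degree $d_{T_1}(v)+1-(1-\mu)^{-(k-1)}$ — and degenerate branches on which all $x_{u_i}$ vanish. Thus the formula writes $\Phi_{\mathcal L}(T,\mu)$ as $\Phi_{\mathcal L}(\widetilde T_1,\mu)^{\,k^{k-2}}$ times explicit factors that are powers of $1-\mu$ (units at $\mu=0$) and a Laplacian characteristic polynomial of $T_1$ with a bumped weight at $v$, where $\widetilde T_1$ denotes $T_1$ with $v$ weighted as above. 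Since that weight equals $d_{T_1}(v)$ at $\mu=0$, $\widetilde T_1$ behaves like $T_1$ near $\mu=0$; to pin this down and to check the bumped-weight factor does not vanish at $\mu=0$, I would invoke the Laplacian matching polynomial together with the cut-vertex formula at $v$, the latter expressing $\Phi_{\mathcal L}(\widetilde T_1,\mu)$ through Laplacian matching polynomials of the branches of $T_1$ hanging at $v$. Counting exponents then yields $\mathrm{mult}_0(T)=k^{k-2}\cdot\mathrm{mult}_0(\widetilde T_1)=k^{k-2}\cdot\mathrm{mult}_0(T_1)$, completing the induction.

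The main obstacle I anticipate is precisely this control of the weighted hypergraph $\widetilde T_1$: establishing that the Laplacian matching polynomial recovers $\Phi_{\mathcal L}$ of an honest hypertree up to a power with transparent behaviour at $\mu=0$, and that replacing $d_{T_1}(v)$ by the perturbed weight $d_{T_1}(v)+1-(1-\mu)^{-(k-1)}$ alters $\Phi_{\mathcal L}(\widetilde T_1,\mu)$ only in factors that stay nonzero at $\mu=0$ — so that the per-edge multiplier is exactly $k^{k-2}$ rather than merely a bound, and no spurious vanishing at $\mu=0$ is introduced when deleting $v$ splits $T_1$ into branches. Everything else amounts to bookkeeping with the pendant-edge and cut-vertex reduction formulas and the single-edge computation.
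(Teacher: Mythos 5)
Your overall architecture coincides with the paper's: induction on $|E(T)|$, peeling a pendant edge at its non-pendent vertex $w$ via the Poisson/resultant reduction formula, identifying a generic branch with multiplier $k^{k-2}$ (your roots-of-unity count matches the paper's $K_{2}=k^{k-2}$ and the value $\mathbf{q}_{e'\setminus\{w\}}=(\frac{-1}{\lambda-1})^{k-1}$), and invoking Laplacian matching polynomials to control behaviour at $\lambda=0$. The problem is that the step you defer as ``the main obstacle'' is not a technicality to be checked but the entire content of the proof, and your proposed heuristic for it does not work as stated. The perturbed weight $d_{T_1}(v)+1-(1-\mu)^{-(k-1)}$ differs from $d_{T_1}(v)$ by $-(k-1)\mu+O(\mu^{2})$, i.e.\ it vanishes to exactly first order at $\mu=0$ --- the same order as the simple zero you are trying to count --- so ``the weight equals $d_{T_1}(v)$ at $\mu=0$, hence $\widetilde T_1$ behaves like $T_1$ near $\mu=0$'' is precisely the kind of argument that can silently change the order of vanishing. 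The paper avoids this by computing the bumped per-point factor \emph{exactly}: it equals $\varphi_{T}(T)/\varphi_{T}(T-w)$ (not $\varphi_{\widetilde T}(\widetilde T)/\varphi_{\widetilde T}(\widetilde T-w)$; the two agree at $\mu=0$ but have different derivatives there), via the identity $(\lambda-1)^{k-1}\varphi_{T}(\widetilde T)+(-1)^{k-1}\varphi_{T}(\widetilde T-w)=\varphi_{T}(T)$, and then shows that zero is a \emph{simple} root of $\varphi_{T}(T)$ and a non-root of $\varphi_{T}(T-w)$.

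Concretely, three ingredients are missing from your plan and each requires a real argument. First, an inductive formula (the paper's Lemma 4.1) expressing $\mathbf{p}_{e\setminus\{w\}}$, for points $\mathbf{p}$ of the affine variety with all components non-zero, as $(-1)^{k-1}\varphi_{T}(T-V(e))/\varphi_{T}(T-w)$; this is what converts the resultant-theoretic factor into a ratio of matching polynomials. Second, the fact that only the all-non-zero points of the variety can contribute to the zero eigenvalue (so the degenerate branches, the $K_{1}$-factor, and $\phi(\mathcal{L}_{\widetilde T}(w))$ may all be discarded); this rests on the theorem of Hu and Qi that eigenvectors of the zero Laplacian eigenvalue of a connected uniform hypergraph have no zero component, together with the positivity of the real eigenvalues of every proper principal sub-tensor of $\mathcal{L}_{T}$ (a $\mathcal{Z}$-tensor/minimum H-eigenvalue argument). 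Third, the simplicity of the zero root of $\varphi_{T}(T)$ and its non-vanishing on non-trivial sub-hypertrees, which the paper gets from Wan et al.'s theorem identifying the roots of $\varphi_{T}(\widetilde T)$ with the spectrum of $\mathcal{L}_{T}[V(\widetilde T)]$, combined with the derivative identity $\frac{\mathrm{d}}{\mathrm{d}\lambda}\varphi_{T}(T)=\sum_{v}\varphi_{T}(T-v)$ and a sign analysis of the constant term. Without these, the exponent count gives at best an upper bound of the form ``the multiplier divides into $k^{k-2}$ times something,'' not the exact equality $n_{0}(T)=k^{k-2}\,n_{0}(T_1)$.
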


The eigenvalues of uniform hypertrees can be studied by the matching polynomial.
In 2017, Zhang et al. \cite{zhang2017spectra} showed that the roots of the matching polynomial of a uniform hypertree are its eigenvalues.
For a $k$-uniform hypertree $T$ with $k\geq3$,
Clark and Cooper \cite{clark2018hypertrees} determined all eigenvalues of $T$ by roots of the matching polynomials of all
sub-hypertrees of $T$.
In 2022, Wan et al. \cite{wan2022spectra} defined the Laplacian matching polynomial of uniform hypergraphs,
and used the roots of the Laplacian matching polynomials of all sub-hypertrees of $T$ to obtain all Laplacian eigenvalues of $T$ (without multiplicity).

In this paper, we give a expression for the Laplacian characteristic polynomial of uniform hypergraphs with cut vertices or pendant edges (Section \ref{section2}).
And we characterize some properties on the Laplacian matching polynomial of uniform hypergraphs (Section \ref{section3}).
Further, we use these results to give the multiplicity of the zero Laplacian eigenvalue of uniform hypertrees,
which shows that Conjecture \ref{caixiang1.1} is true (Section \ref{section4}).

\section{The Laplacian characteristic polynomial of uniform hypergraphs}\label{section2}

\subsection{Preliminaries}

In this subsection, we present some notation and lemmas about the eigenvalue of tensors and the formula of resultants.

A $k$-order $n$-dimensional tensor $\mathcal{A}=(a_{i_{1}i_{2}\cdots i_{k}})$ refers to a multi-dimensional array with entries $a_{i_{1}i_{2}\cdots i_{k}}$ for all $i_{j}\in[n]:=\{1,\ldots,n\}$ and $j\in[k]$.
If there exists $\lambda\in\mathbb{C}$ and a non-zero vector $\mathbf{x}=(x_{1},\ldots,x_{n})^{\mathrm{T}}\in\mathbb{C}^{n}$ such that
\begin{align*}
\mathcal{A}\mathbf{x}^{k-1}=\lambda\mathbf{x}^{[k-1]},
\end{align*}
where $\mathcal{A}\mathbf{x}^{k-1}$ is an $n$-dimensional vector with $\sum_{i_{2},\ldots,i_{k}=1}^{n}a_{ii_{2}\ldots i_{k}}x_{i_{2}}\cdots x_{i_{k}}$ as its $i$-th component
 and $\mathbf{x}^{[k-1]}=(x_{1}^{k-1},\ldots,x_{n}^{k-1})^{\mathrm{T}}$,
then $\lambda$ is called an eigenvalue of $\mathcal{A}$ and $\mathbf{x}$ is an eigenvector of $\mathcal{A}$ corresponding to $\lambda$ (see \cite{lim2005singular,qi2005eigenvalues}).
The resultant of the polynomials system $(\lambda\mathbf{x}^{[k-1]}-\mathcal{A}\mathbf{x}^{k-1})$ is called the characteristic polynomial of $\mathcal{A}$, denoted by $\phi(\mathcal{A})$.

In the following, we introduce some formulas of resultants required for proofs in this section.

\begin{lem}\citep[Poisson Formula for resultants]{gelfand1994discriminants}\label{yinli2.1}
Let $F_{1},F_{2},\ldots,F_{n}\in\mathbb{C}[x_{1},\ldots,x_{n}]$ be homogeneous polynomials of respective degrees $d_{1},d_{2},\ldots,d_{n}$.
For each $i\in[n]$,
let $\overline{F}_{i}=F_{i}|_{x_{1}=0}$ and $f_{i}=F_{i}|_{x_{1}=1}$.
Let $\mathcal{V}$ be the affine variety defined by the polynomials $f_{2},\ldots,f_{n}$.
If $\mathrm{Res}(\overline{F}_{2},\ldots,\overline{F}_{n})\neq0$, then
\begin{align*}
\mathrm{Res}(F_{1},F_{2},\ldots,F_{n})
=\mathrm{Res}(\overline{F}_{2},\ldots,\overline{F}_{n})^{d_{1}}\prod_{\mathbf{p}\in\mathcal{V}}f_{1}(\mathbf{p})^{m(\mathbf{p})},
\end{align*}
where $m(\mathbf{p})$ is the multiplicity of a point $\mathbf{p}$ in $\mathcal{V}$.
\end{lem}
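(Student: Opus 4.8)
The plan is to prove the identity by the classical elimination-theory strategy: establish it on a Zariski-dense family of systems and then extend it by continuity, exploiting that both sides of the claimed equation are regular functions of the coefficients of $F_1,\dots,F_n$ on the locus $\{\mathrm{Res}(\overline F_2,\dots,\overline F_n)\neq 0\}$.

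First I would fix generic homogeneous forms $F_2,\dots,F_n$ of degrees $d_2,\dots,d_n$ with $\mathrm{Res}(\overline F_2,\dots,\overline F_n)\neq0$. This condition says exactly that $F_2,\dots,F_n$ have no common zero on the hyperplane $\{x_1=0\}\subset\mathbb{P}^{n-1}$, so by B\'ezout their common projective zeros $p_1,\dots,p_N$ (with $N=d_2\cdots d_n$) all lie in the affine chart $\{x_1=1\}$, and for a generic choice they are simple; thus $\mathcal V=\{\widetilde p_1,\dots,\widetilde p_N\}$ is a set of $N$ distinct reduced points and every $m(\mathbf p)=1$. Treating the coefficient vector of $F_1$ as the variable, the fundamental property of the resultant gives that $\mathrm{Res}(F_1,F_2,\dots,F_n)$ vanishes precisely when $F_1(p_i)=0$ for some $i$, i.e.\ precisely when $\prod_{i=1}^N f_1(\widetilde p_i)=0$. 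Both $\mathrm{Res}(F_1,\dots,F_n)$ and $\prod_{i=1}^N f_1(\widetilde p_i)$ are polynomials in the coefficients of $F_1$ of degree $d_2\cdots d_n$ (the known degree of the resultant in the coefficients of the first form) with the same zero set, a union of $N$ distinct hyperplanes, so they must be proportional:
\[
\mathrm{Res}(F_1,F_2,\dots,F_n)=c(F_2,\dots,F_n)\prod_{i=1}^N f_1(\widetilde p_i).
\]

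Next I would compute the constant $c$ by specializing $F_1=x_1^{d_1}$: then $f_1\equiv1$, so the product equals $1$, while the standard identity $\mathrm{Res}(x_1^{d_1},F_2,\dots,F_n)=\mathrm{Res}(F_2|_{x_1=0},\dots,F_n|_{x_1=0})^{d_1}=\mathrm{Res}(\overline F_2,\dots,\overline F_n)^{d_1}$ yields $c=\mathrm{Res}(\overline F_2,\dots,\overline F_n)^{d_1}$. This settles the formula for generic $F_2,\dots,F_n$ and all $F_1$. To remove genericity, I would note that on the open set $U$ of coefficient space where $\mathrm{Res}(\overline F_2,\dots,\overline F_n)\neq0$ the left side is a polynomial, and the right side $\mathrm{Res}(\overline F_2,\dots,\overline F_n)^{d_1}\prod_{\mathbf p\in\mathcal V}f_1(\mathbf p)^{m(\mathbf p)}$ is also regular on $U$, the weighted product being a symmetric function of the (possibly multiple) affine roots that extends the simple-root product across the discriminant locus; since the two regular functions agree on the dense generic subset of $U$, they agree on all of $U$.

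The main obstacle is exactly this last step: one must show that $\prod_{\mathbf p\in\mathcal V}f_1(\mathbf p)^{m(\mathbf p)}$, with $m(\mathbf p)$ the length of the local ring of $\mathcal V$ at $\mathbf p$, is genuinely a polynomial in the coefficients — not merely a rational function — and that it is the correct limit of the distinct-roots product under degeneration. The clean way to do this is to identify $\prod_{\mathbf p\in\mathcal V}f_1(\mathbf p)^{m(\mathbf p)}$ with $\det(\text{multiplication by }f_1)$ on the finite $\mathbb C$-algebra $\mathbb C[x_2,\dots,x_n]/(f_2,\dots,f_n)$ (equivalently, to invoke the u-resultant / Macaulay formalism); this determinant depends polynomially on the coefficients and varies continuously under deformation, which provides the extension from generic systems to all of $U$ and closes the argument.
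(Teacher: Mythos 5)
This lemma is not proved in the paper at all: it is quoted verbatim from Gelfand--Kapranov--Zelevinsky as a known result, so there is no in-paper argument to compare against. Your sketch is, in substance, the standard textbook proof of the Poisson formula (essentially the one in Cox--Little--O'Shea, \emph{Using Algebraic Geometry}, Ch.~3): reduce to generic $F_2,\dots,F_n$ where the $N=d_2\cdots d_n$ common zeros are distinct reduced points off the hyperplane at infinity, match $\mathrm{Res}(F_1,\dots,F_n)$ and $\prod_i f_1(\widetilde p_i)$ as degree-$N$ polynomials in the coefficients of $F_1$ with the same zero locus, pin down the constant via $F_1=x_1^{d_1}$, and then degenerate. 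The outline is sound, and you correctly identify the genuine technical crux --- that $\prod_{\mathbf p}f_1(\mathbf p)^{m(\mathbf p)}$ must be recognized as $\det\bigl(m_{f_1}\colon A\to A\bigr)$ for $A=\mathbb{C}[x_2,\dots,x_n]/(f_2,\dots,f_n)$ in order to be a regular function on $U$ that limits correctly across the discriminant --- together with the right tool to resolve it. Two smaller points you gloss over: the proportionality step needs the $N$ hyperplanes $\{F_1:F_1(p_i)=0\}$ to be pairwise distinct (true because degree-$d_1$ forms separate points of $\mathbb{P}^{n-1}$, but worth saying), and the ``standard identity'' $\mathrm{Res}(x_1^{d_1},F_2,\dots,F_n)=\mathrm{Res}(\overline F_2,\dots,\overline F_n)^{d_1}$ is itself a lemma requiring proof (e.g.\ via multiplicativity in the first argument plus the base case $d_1=1$). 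As a proof of a classical cited fact, your proposal is acceptable; none of its steps would fail.
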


\begin{lem}\citep[lemma 3.2]{cooper2012spectra}\label{yinli2.2}
Let $F_{1},\ldots,F_{n}\in\mathbb{C}[x_{1},\ldots,x_{n}]$ be homogeneous polynomials of respective degrees $d_{1},\ldots,d_{n}$,
and let $G_{1},\ldots,G_{m}\in\mathbb{C}[y_{1},\ldots,y_{m}]$ be homogeneous polynomials of respective degrees $\delta_{1},\ldots,\delta_{m}$.
Then
\begin{align*}
\mathrm{Res}(F_{1},\ldots,F_{n},G_{1},\ldots,G_{m})=\mathrm{Res}(F_{1},\ldots,F_{n})^{\prod_{j=1}^{m}\delta_{j}}\mathrm{Res}(G_{1},\ldots,G_{m})^{\prod_{i=1}^{n}d_{i}}.
\end{align*}
\end{lem}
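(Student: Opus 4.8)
The plan is to prove the identity by comparing the two sides as polynomials in the coefficients of the $F_i$ and $G_j$, exploiting three classical properties of the multipolynomial resultant (see \cite{gelfand1994discriminants}): its vanishing criterion, its irreducibility, and its multidegree in the coefficients of each form. Throughout I write $R:=\mathrm{Res}(F_1,\ldots,F_n,G_1,\ldots,G_m)$, together with the abbreviations $\mathrm{Res}(F):=\mathrm{Res}(F_1,\ldots,F_n)$ and $\mathrm{Res}(G):=\mathrm{Res}(G_1,\ldots,G_m)$, all regarded as polynomials in the relevant coefficient variables. This makes sense because the combined system consists of $n+m$ homogeneous forms in the $n+m$ variables $x_1,\ldots,x_n,y_1,\ldots,y_m$, while $\mathrm{Res}(F)$ and $\mathrm{Res}(G)$ involve disjoint sets of coefficient variables.

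First I would pin down the vanishing locus of $R$. Since each $F_i$ involves only the $x$-variables and each $G_j$ only the $y$-variables, a point $(\mathbf{x},\mathbf{y})\neq\mathbf{0}$ is a common projective zero of the combined system iff $F_i(\mathbf{x})=0$ for all $i$ and $G_j(\mathbf{y})=0$ for all $j$. If $\mathbf{x}\neq\mathbf{0}$ this forces $\mathrm{Res}(F)=0$, and if $\mathbf{x}=\mathbf{0}$ then $\mathbf{y}\neq\mathbf{0}$ forces $\mathrm{Res}(G)=0$; conversely, any nontrivial zero of the $F$'s paired with $\mathbf{y}=\mathbf{0}$, or of the $G$'s paired with $\mathbf{x}=\mathbf{0}$, is a nontrivial zero of the combined system, because positive-degree forms vanish at the origin. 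Hence in coefficient space $R=0$ exactly when $\mathrm{Res}(F)\cdot\mathrm{Res}(G)=0$, i.e. $V(R)=V(\mathrm{Res}(F))\cup V(\mathrm{Res}(G))$.

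Second, I would upgrade this set-theoretic equality to a factorization. Because $\mathrm{Res}(F)$ and $\mathrm{Res}(G)$ are irreducible and involve disjoint coefficient variables, they cut out two distinct irreducible hypersurfaces. Any irreducible factor $p$ of $R$ satisfies $V(p)\subseteq V(R)$, and since $V(p)$ is irreducible of codimension one it must coincide with one of these two hypersurfaces, so $p$ is a scalar multiple of $\mathrm{Res}(F)$ or of $\mathrm{Res}(G)$. On the other hand, the inclusions $V(\mathrm{Res}(F))\subseteq V(R)$ and $V(\mathrm{Res}(G))\subseteq V(R)$ together with the Nullstellensatz show that both $\mathrm{Res}(F)$ and $\mathrm{Res}(G)$ divide $R$. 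Therefore $R=c\,\mathrm{Res}(F)^{a}\,\mathrm{Res}(G)^{b}$ for some $c\in\mathbb{C}^{\times}$ and integers $a,b\geq1$.

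Finally, I would fix the exponents and the constant by bookkeeping. Using homogeneity of the resultant — it is homogeneous in the coefficients of a given form of degree equal to the product of the degrees of the remaining forms — the degree of $R$ in the coefficients of $F_i$ is $\bigl(\prod_{\ell\neq i}d_\ell\bigr)\prod_{j}\delta_j$, whereas on the right-hand side only $\mathrm{Res}(F)^{a}$ contributes, with degree $a\prod_{\ell\neq i}d_\ell$; matching gives $a=\prod_{j}\delta_j$, and symmetrically $b=\prod_{i}d_i$. To evaluate $c$ I would specialize to the diagonal system $F_i=x_i^{d_i}$, $G_j=y_j^{\delta_j}$, for which the normalization $\mathrm{Res}(x_1^{d_1},\ldots,x_n^{d_n})=1$ forces every resultant appearing to equal $1$, so $c=1$, completing the proof. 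The genuinely delicate point is the second step: it rests on the irreducibility of the multipolynomial resultant and on verifying that no spurious irreducible component of $V(R)$ is overlooked, so that the clean degree count in the last step can legitimately finish the argument; as an alternative one could instead iterate the Poisson formula of Lemma \ref{yinli2.1} to eliminate the $y$-variables one at a time, though that route is more computational.
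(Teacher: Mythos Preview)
The paper does not supply a proof of this lemma; it is quoted verbatim from \cite{cooper2012spectra} as a cited result, so there is no in-paper argument to compare against. Your proof is correct and self-contained: the vanishing-locus computation, the irreducibility/Nullstellensatz step giving $R=c\,\mathrm{Res}(F)^a\mathrm{Res}(G)^b$, the multidegree bookkeeping to pin down $a,b$, and the diagonal specialization for $c$ all go through as written.

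For context, the original proof in \cite{cooper2012spectra} follows precisely the alternative you mention at the end---iterating the Poisson formula (Lemma~\ref{yinli2.1}) to strip off the $y$-variables one by one---rather than invoking irreducibility of the resultant. Your route is more structural and avoids an induction, at the cost of importing the (nontrivial) irreducibility theorem from \cite{gelfand1994discriminants}; the Poisson iteration is more hands-on but stays within the toolkit already set up in the paper. Either is adequate here, and your awareness of both is a plus.
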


Let $H=(V(H),E(H))$ be a $k$-uniform hypergraph with $V(H)=[n]$.
For a vertex $v\in V(H)$,
let $E_{H}(v)$ denote the set of edges of $H$ containing $v$ and $d_{H}(v)$ denote the degree of $v$ in $H$.
Given an edge $e\in E(H)$ and a vector $\mathbf{x}=(x_{1},\ldots,x_{n})^{\mathrm{T}}\in \mathbb{C}^{n}$,
let $\mathbf{x}_{e}=\prod_{v\in e}x_{v}$.
Then the eigenvalue equation $\mathcal{L}_{H}\mathbf{x}^{k-1}=\lambda\mathbf{x}^{[k-1]}$ corresponding to the Laplacian tensor of $H$ can be written as
\begin{align*}
d_{H}(v)x_{v}^{k-1}-\sum_{e\in E_{H}(v)}\mathbf{x}_{e\setminus\{v\}}=\lambda x_{v}^{k-1}, v=1,\ldots,n.
\end{align*}
For each $v\in V(H)$, define
\begin{align*}
F_{v}=(\lambda-d_{H}(v))x_{v}^{k-1}+\sum_{e\in E_{H}(v)}\mathbf{x}_{e\setminus\{v\}}.
\end{align*}
For a fixed vertex $w\in V(H)$, let
\begin{align*}
\overline{F}_{v}=F_{v}|_{x_{w}=0}, f_{v}=F_{v}|_{x_{w}=1}.
\end{align*}
Let $\mathcal{V}^{H}$ be the affine variety defined by the polynomials $f_{v}$ for all $v\in V(H)\setminus\{w\}$.
We use $\mathcal{L}_{H}(w)=(l_{i_{1}\cdots i_{k}})$ to denote a $k$-order $n-1$-dimensional principal sub-tensor of $\mathcal{L}_{H}$,
where $i_{1},\ldots,i_{k}\in V(H)\setminus\{w\}$.
By the Poisson Formula for resultants, we obtain the following lemma about the Laplacian characteristic polynomial of $H$.

\begin{lem}\label{yinli2.3}
Let $H$ be a $k$-uniform hypergraph and $w$ be a vertex on $H$.
Then the Laplacian characteristic polynomial
\begin{align}\label{shi2.1}
\phi(\mathcal{L}_{H})
=\phi(\mathcal{L}_{H}(w))^{k-1}\prod_{\mathbf{p}\in\mathcal{V}^{H}}(\lambda-d_{H}(w)+\sum_{e\in E_{H}(w)}\mathbf{p}_{e\setminus\{w\}})^{m(\mathbf{p})},
\end{align}
where $m(\mathbf{p})$ is the multiplicity of $\mathbf{p}$ in $\mathcal{V}^{H}$.
\end{lem}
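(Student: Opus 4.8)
The plan is to apply the Poisson Formula for resultants (Lemma \ref{yinli2.1}) directly to the polynomial system $\{F_v : v \in V(H)\}$, using the chosen vertex $w$ to play the role of the distinguished variable $x_1$. First I would observe that each $F_v$ is homogeneous of degree $d_v := k-1$ in the variables $x_1,\dots,x_n$, and that the total number of variables equals the number of polynomials, so the resultant $\mathrm{Res}(F_1,\dots,F_n)$ is exactly $\phi(\mathcal{L}_H)$ by definition. Setting $x_w = 0$ in $F_v$ kills precisely those monomials $\mathbf{x}_{e\setminus\{v\}}$ for which $w \in e$; what remains, $\overline{F}_v$, is the eigenvalue polynomial attached to vertex $v$ in the sub-hypergraph on $V(H)\setminus\{w\}$ obtained by deleting $w$ — equivalently it is the defining polynomial system for the principal sub-tensor $\mathcal{L}_H(w)$. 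Hence $\mathrm{Res}(\overline{F}_v : v \neq w) = \phi(\mathcal{L}_H(w))$, and when this is nonzero the hypothesis of Lemma \ref{yinli2.1} is met.

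With the hypothesis in place, Lemma \ref{yinli2.1} gives
\begin{align*}
\phi(\mathcal{L}_H) = \mathrm{Res}(F_w, \{F_v : v\neq w\}) = \phi(\mathcal{L}_H(w))^{k-1}\prod_{\mathbf{p}\in\mathcal{V}^H} f_w(\mathbf{p})^{m(\mathbf{p})},
\end{align*}
since $d_w = k-1$ and $\mathcal{V}^H$ is the affine variety cut out by $f_v = F_v|_{x_w=1}$ for $v\neq w$. It then remains only to identify $f_w$ explicitly: substituting $x_w = 1$ into $F_w = (\lambda - d_H(w))x_w^{k-1} + \sum_{e\in E_H(w)}\mathbf{x}_{e\setminus\{w\}}$ yields $f_w = \lambda - d_H(w) + \sum_{e\in E_H(w)}\mathbf{x}_{e\setminus\{w\}}$, so $f_w(\mathbf{p}) = \lambda - d_H(w) + \sum_{e\in E_H(w)}\mathbf{p}_{e\setminus\{w\}}$, which is precisely the factor appearing in \eqref{shi2.1}.

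The one point that needs care — and what I would flag as the main obstacle — is the genericity assumption $\mathrm{Res}(\overline{F}_2,\dots,\overline{F}_n) \neq 0$, i.e. $\phi(\mathcal{L}_H(w)) \neq 0$, which need not hold for every hypergraph and every choice of $w$. The standard remedy is a perturbation argument: replace $\lambda$ by a formal indeterminate (or perturb the diagonal of $\mathcal{L}_H$), so that both sides of \eqref{shi2.1} become polynomials in $\mathbb{C}[\lambda]$ (or in an enlarged polynomial ring) that agree on a Zariski-dense set, hence agree identically; then specialize back. Concretely, one checks that $\phi(\mathcal{L}_H(w))$, as a polynomial in $\lambda$, is not identically zero (its leading term in $\lambda$ is a nonzero power of $\lambda$), so Lemma \ref{yinli2.1} applies for all but finitely many values of $\lambda$, and continuity/polynomial identity of both sides in $\lambda$ extends \eqref{shi2.1} to all $\lambda$. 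I would write this density step explicitly, and otherwise the proof is a direct bookkeeping of which monomials survive the substitutions $x_w = 0$ and $x_w = 1$.
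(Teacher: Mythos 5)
Your proposal follows the same route as the paper's proof: apply the Poisson formula (Lemma \ref{yinli2.1}) with $x_w$ as the distinguished variable, identify $\mathrm{Res}(\overline{F}_v : v\neq w)$ with $\phi(\mathcal{L}_H(w))$ via the observation that the $\overline{F}_v$ are the eigenvalue equations of the principal sub-tensor, and read off $f_w$ at $x_w=1$. The only difference is that you explicitly justify the hypothesis $\mathrm{Res}(\overline{F}_v : v\neq w)\neq 0$ by a polynomial-identity-in-$\lambda$ argument, a point the paper's proof passes over silently; your version is the more careful one.
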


\begin{proof}

By the definition of the Laplacian characteristic polynomial,
we know that $\phi(\mathcal{L}_{H})=\mathrm{Res}(F_{v}:v\in V(H))$,
where $F_{v}=(\lambda-d_{H}(v))x_{v}^{k-1}+\sum_{e\in E_{H}(v)}\mathbf{x}_{e\setminus\{v\}}$.
For the vertex $w\in V(H)$, by Lemma \ref{yinli2.1}, we have
\begin{align*}
\phi(\mathcal{L}_{H})=\mathrm{Res}(\overline{F}_{v}:v\in V(H)\setminus\{w\})^{k-1}
\prod_{\mathbf{p}\in\mathcal{V}^{H}}f_{w}(\mathbf{p})^{m(\mathbf{p})}.
\end{align*}
For all $v\in V(H)\setminus\{w\}$,
$\overline{F}_{v}=F_{v}|_{x_{w}=0}=(\lambda-d_{H}(v))x_{v}^{k-1}+\sum_{e\in E_{H-w}(v)}\mathbf{x}_{e\setminus\{v\}}=0$ are the eigenvalue equations of $\mathcal{L}_{H}(w)$,
where $H-w$ denote the hypergraph obtained from $H$ by removing the vertex $w$ and all edges incident to it,
so we have
\begin{align}\label{shi2.2}
\mathrm{Res}(\overline{F}_{v}: v\in V(H)\setminus\{w\})=\phi(\mathcal{L}_{H}(w)).
\end{align}
Note that $f_{w}=F_{w}|_{x_{w}=1}=\lambda-d_{H}(w)+\sum_{e\in E_{H}(w)}\mathbf{x}_{e\setminus\{w\}}$.
Then we obtain
\begin{align*}
\phi(\mathcal{L}_{H})
=\phi(\mathcal{L}_{H}(w))^{k-1}\prod_{\mathbf{p}\in\mathcal{V}^{H}}(\lambda-d_{H}(w)+\sum_{e\in E_{H}(w)}\mathbf{p}_{e\setminus\{w\}})^{m(\mathbf{p})}.
\end{align*}

\end{proof}

When $H$ is a uniform hypergraph with cut vertices,
we can give a description of the affine variety $\mathcal{V}^{H}$ for this case
and obtain a more explicit expression for the Laplacian characteristic polynomial of $H$ than (\ref{shi2.1}).

\subsection{Main results}

Let $H=(V(H),E(H))$ be a $k$-uniform connected hypergraph and $w\in V(H)$.
Denote $\widehat{E}_{H}(w)=\{e\setminus\{w\}:e\in E_{H}(w)\}$.
Deleting the vertex $w$,
it can get a non-uniform hypergraph $\widehat{H}$ with vertex set $V(\widehat{H})=V(H)\setminus\{w\}$ and edge set $E(\widehat{H})=(E(H)\setminus E_{H}(w))\cup \widehat{E}_{H}(w)$.
The vertex $w$ is called a cut vertex if $\widehat{H}$ is not connected \cite{bao2020combinatorial}.
Suppose that $w$ is a cut vertex on $H$ and $\widehat{H}_{1},\ldots,\widehat{H}_{s}$ are connected components of $\widehat{H}$.
For each $i\in[s]$,
denote the induced sub-hypergraph of $H$ on $V(\widehat{H}_{i})\cup\{w\}$ by $\widetilde{H}_{i}$,
and we call $\widetilde{H}_{i}$ a branch of $H$ associated with $w$.
Clearly, $H$ can be obtained by coalescing the branches $\widetilde{H}_{1},\ldots,\widetilde{H}_{s}$ to the vertex $w$.
Recall that the affine variety $\mathcal{V}^{H}$ is defined by the polynomials $f_{v}=(\lambda-d_{H}(v))x_{v}^{k-1}+\sum_{e\in E_{H}(v)}\mathbf{x}_{e\setminus\{v\}}|_{x_{w}=1}$ for all $v\in V(H)\setminus\{w\}$.
Then, for each $v_{i}\in V(\widetilde{H}_{i})\setminus\{w\}$ and $i\in[s]$, we have
\begin{align*}
f_{v_{i}}&=(\lambda-d_{H}(v_{i}))x_{v_{i}}^{k-1}+\sum_{e\in E_{H}(v_{i})}\mathbf{x}_{e\setminus\{v_{i},w\}}\\
&=(\lambda-d_{\widetilde{H}_{i}}(v_{i}))x_{v_{i}}^{k-1}+\sum_{e\in E_{\widetilde{H}_{i}}(v_{i})}\mathbf{x}_{e\setminus\{v_{i},w\}}.
\end{align*}
It is known that $\mathcal{V}^{\widetilde{H}_{i}}$ is the affine variety defined by the polynomials $f_{v_{i}}$ for all $v_{i}\in V(\widetilde{H}_{i})\setminus\{w\}$ and each $i\in[s]$.
So
\begin{align}\label{shi2.3}
\mathcal{V}^{H}=\mathcal{V}^{\widetilde{H}_{1}}\times\cdots\times\mathcal{V}^{\widetilde{H}_{s}}.
\end{align}

Combining Lemma \ref{yinli2.1} with (\ref{shi2.3}),
an expression for the Laplacian characteristic polynomial of uniform hypergraphs with cut vertices is derived as follows.

\begin{thm}\label{dingli2.4}
Let $H$ be a $k$-uniform hypergraph and $w$ be a cut vertex on $H$.
Let $\widetilde{H}_{1},\ldots,\widetilde{H}_{s}$ are the branches of $H$ associated with $w$.
Denote $\mathcal{V}^{(i)}=\mathcal{V}^{\widetilde{H}_{i}}$ and $E_{i}(w)=E_{\widetilde{H} _{i}}(w)$.
Then
\begin{align*}
\phi(\mathcal{L}_{H})=\prod_{i=1}^{s}\phi\left(\mathcal{L}_{\widetilde{H}_{i}}(w)\right)^{(k-1)^{2-s+\sum_{j\neq i}|V(\widetilde{H}_{j})|}}
\prod_{\substack{\mathbf{p}^{(i)}\in\mathcal{V}^{(i)}\\i\in[s]}}
(\lambda-\sum_{i=1}^{s}d_{\widetilde{H}_{i}}(w)+\sum_{\substack{e\in E_{i}(w)\\i\in[s]}}\mathbf{p}^{(i)}_{e\setminus\{w\}})^{\prod_{i=1}^{s}m(\mathbf{p}^{(i)})},
\end{align*}
where $m(\mathbf{p}^{(i)})$ is the multiplicity of $\mathbf{p}^{(i)}$ in $\mathcal{V}^{(i)}$ for each $i\in[s]$.
\end{thm}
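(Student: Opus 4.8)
The plan is to feed the cut-vertex structure of $H$ into Lemma \ref{yinli2.3} applied at $w$, and then to split the two ingredients of (\ref{shi2.1}) — the factor $\phi(\mathcal{L}_{H}(w))^{k-1}$ and the product over $\mathcal{V}^{H}$ — along the branches. First I would record the basic consequences of $w$ being a cut vertex: every edge $e\not\ni w$ lies inside a single component $\widehat{H}_{i}$, and every edge $e\ni w$ has $e\setminus\{w\}$ inside one $\widehat{H}_{i}$, hence $e\subseteq V(\widetilde{H}_{i})$; therefore $E_{H}(w)=\bigsqcup_{i=1}^{s}E_{i}(w)$ and $d_{H}(w)=\sum_{i=1}^{s}d_{\widetilde{H}_{i}}(w)$. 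Moreover, for every $v\in V(H)\setminus\{w\}$ all edges of $H$ through $v$ lie in the unique branch containing $v$, so $d_{\widetilde{H}_{i}}(v)=d_{H}(v)$; this is what makes the polynomials $\overline{F}_{v}$ that define $\phi(\mathcal{L}_{H}(w))$ break into $s$ groups in pairwise disjoint variable sets, the $i$-th group being exactly the eigenvalue system of the principal sub-tensor $\mathcal{L}_{\widetilde{H}_{i}}(w)$, with each $\overline{F}_{v}$ homogeneous of degree $k-1$.

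Next I would apply Lemma \ref{yinli2.2} iteratively to these $s$ groups. Writing $n_{j}=|V(\widetilde{H}_{j})|-1$ for the number of variables in the $j$-th group, a straightforward induction on $s$ (all degrees equal to $k-1$) gives
\begin{align*}
\phi(\mathcal{L}_{H}(w))=\mathrm{Res}(\overline{F}_{v}:v\in V(H)\setminus\{w\})=\prod_{i=1}^{s}\phi\big(\mathcal{L}_{\widetilde{H}_{i}}(w)\big)^{(k-1)^{\sum_{j\neq i}n_{j}}}.
\end{align*}
Since $\sum_{j\neq i}n_{j}=\sum_{j\neq i}|V(\widetilde{H}_{j})|-(s-1)$, raising the whole expression to the power $k-1$ as prescribed by (\ref{shi2.1}) multiplies each exponent $(k-1)^{\sum_{j\neq i}n_{j}}$ by $k-1$, turning it into $(k-1)^{1+\sum_{j\neq i}n_{j}}=(k-1)^{2-s+\sum_{j\neq i}|V(\widetilde{H}_{j})|}$, exactly the exponent in the statement.

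For the product over $\mathcal{V}^{H}$, I would use (\ref{shi2.3}): a point $\mathbf{p}\in\mathcal{V}^{H}$ is a tuple $(\mathbf{p}^{(1)},\ldots,\mathbf{p}^{(s)})$ with $\mathbf{p}^{(i)}\in\mathcal{V}^{(i)}$, and since $\mathcal{V}^{H}$ is cut out by the union of the defining equations of the $\mathcal{V}^{(i)}$ in disjoint variables, its local ring at $\mathbf{p}$ is the tensor product over $\mathbb{C}$ of the local rings of the $\mathcal{V}^{(i)}$ at the $\mathbf{p}^{(i)}$; as length is multiplicative over such tensor products, $m(\mathbf{p})=\prod_{i=1}^{s}m(\mathbf{p}^{(i)})$. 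Finally, using $d_{H}(w)=\sum_{i}d_{\widetilde{H}_{i}}(w)$ and the fact that for $e\in E_{i}(w)$ we have $e\setminus\{w\}\subseteq V(\widehat{H}_{i})$, so $\mathbf{p}_{e\setminus\{w\}}=\mathbf{p}^{(i)}_{e\setminus\{w\}}$, the linear factor $\lambda-d_{H}(w)+\sum_{e\in E_{H}(w)}\mathbf{p}_{e\setminus\{w\}}$ in (\ref{shi2.1}) becomes $\lambda-\sum_{i=1}^{s}d_{\widetilde{H}_{i}}(w)+\sum_{i\in[s]}\sum_{e\in E_{i}(w)}\mathbf{p}^{(i)}_{e\setminus\{w\}}$, and the product over $\mathbf{p}\in\mathcal{V}^{H}$ rewrites as a product over all tuples $(\mathbf{p}^{(i)})_{i\in[s]}\in\mathcal{V}^{(1)}\times\cdots\times\mathcal{V}^{(s)}$ with exponent $\prod_{i}m(\mathbf{p}^{(i)})$. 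Substituting these two computations into (\ref{shi2.1}) gives the claimed formula.

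I expect the main obstacle to be the two bookkeeping points rather than any deep idea: getting the iterated use of Lemma \ref{yinli2.2} together with the extra factor $k-1$ from Lemma \ref{yinli2.3} to produce precisely the exponent $(k-1)^{2-s+\sum_{j\neq i}|V(\widetilde{H}_{j})|}$, and justifying that point multiplicities are multiplicative across the product $\mathcal{V}^{(1)}\times\cdots\times\mathcal{V}^{(s)}$. The structural identities for degrees and edge incidences are immediate from $w$ being a cut vertex, and — as already in the proof of Lemma \ref{yinli2.3} — all resultant equalities are to be read as identities of polynomials in $\lambda$, so the non-vanishing hypotheses needed to invoke Lemmas \ref{yinli2.1}--\ref{yinli2.2} (and the finiteness of each $\mathcal{V}^{(i)}$) hold after passing to the function field $\mathbb{C}(\lambda)$.
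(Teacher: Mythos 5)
Your proposal is correct and follows essentially the same route as the paper: apply Lemma \ref{yinli2.3} at $w$, factor $\phi(\mathcal{L}_{H}(w))$ over the branches via Lemma \ref{yinli2.2} (your exponent $(k-1)^{1+\sum_{j\neq i}n_{j}}$ agrees with the paper's (\ref{shi2.5}) after raising to the power $k-1$), and decompose the product over $\mathcal{V}^{H}$ using (\ref{shi2.3}). Your explicit justification of the multiplicativity $m(\mathbf{p})=\prod_{i}m(\mathbf{p}^{(i)})$ via local rings is a point the paper passes over silently, but it is not a departure in approach.
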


\begin{proof}

By Lemma \ref{yinli2.3}, the Laplacian characteristic polynomial
\begin{align}\label{shi2.4}
\phi(\mathcal{L}_{H})=\phi(\mathcal{L}_{H}(w))^{k-1}\prod_{\mathbf{p}\in\mathcal{V}^{H}}(\lambda-d_{H}(w)+\sum_{e\in E_{H}(w)}\mathbf{p}_{e\setminus\{w\}})^{m(\mathbf{p})}.
\end{align}
From (\ref{shi2.2}), we know that $\phi(\mathcal{L}_{H}(w))=\mathrm{Res}(\overline{F}_{v}: v\in V(H)\setminus\{w\})$.
Recall that $\overline{F}_{v}=(\lambda-d_{H}(v))x_{v}^{k-1}+\sum_{e\in E_{H}(v)}\mathbf{x}_{e\setminus\{v\}}|_{x_{w}=0}$ for each $v\in V(H)\setminus\{w\}$,
and note that $H$ can be obtained by coalescing the branches $\widetilde{H}_{1},\ldots,\widetilde{H}_{s}$ to the vertex $w$.
For all $v_{i}\in V(\widetilde{H}_{i})\setminus\{w\}$ and each $i\in[s]$, we have
\begin{align*}
\overline{F}_{v_{i}}&=(\lambda-d_{H}(v_{i}))x_{v_{i}}^{k-1}+\sum_{e\in E_{H}(v_{i})}\mathbf{x}_{e\setminus\{v_{i}\}}|_{x_{w}=0}\\
&=(\lambda-d_{\widetilde{H}_{i}}(v_{i}))x_{v_{i}}^{k-1}+\sum_{e\in E_{\widetilde{H}_{i}}(v_{i})}\mathbf{x}_{e\setminus\{v_{i}\}}|_{x_{w}=0}\\
&=(\lambda-d_{\widetilde{H}_{i}}(v_{i}))x_{v_{i}}^{k-1}+\sum_{e\in E_{\widetilde{H}_{i}-w}(v_{i})}\mathbf{x}_{e\setminus\{v_{i}\}},
\end{align*}
where $\widetilde{H}_{i}-w$ denote the hypergraph obtained from $\widetilde{H}_{i}$ by removing the vertex $w$ and all edges incident to it.
So $\phi(\mathcal{L}_{H}(w))=\mathrm{Res}(\overline{F}_{v}: v\in V(H)\setminus\{w\})=\mathrm{Res}(\overline{F}_{v_{i}}: v_{i}\in V(\widetilde{H}_{i})\setminus\{w\}, i\in[s])$.
By Lemma \ref{yinli2.2}, we get
\begin{align*}
\phi(\mathcal{L}_{H}(w))=\prod_{i=1}^{s}\mathrm{Res}(\overline{F}_{v_{i}}:v_{i}\in V(\widetilde{H}_{i})\setminus\{w\})^{(k-1)^{1-s+\sum_{j\neq i}|V(\widetilde{H}_{j})|}}.
\end{align*}
For all $v_{i}\in V(\widetilde{H}_{i})\setminus\{w\}$ and each $i\in[s]$,
$\overline{F}_{v_{i}}=0$ are the eigenvalue equations of $\mathcal{L}_{\widetilde{H}_{i}}(w)$.
Then we have $\mathrm{Res}(\overline{F}_{v_{i}}: v_{i}\in V(\widetilde{H}_{i})\setminus\{w\})=\phi(\mathcal{L}_{\widetilde{H}_{i}}(w))$,
which implies that
\begin{align}\label{shi2.5}
\phi(\mathcal{L}_{H}(w))
=\prod_{i=1}^{s}\phi(\mathcal{L}_{\widetilde{H}_{i}}(w))^{(k-1)^{1-s+\sum_{j\neq i}|V(\widetilde{H}_{j})|}}.
\end{align}

For any $\mathbf{p}\in\mathcal{V}^{H}$, by (\ref{shi2.3}),
we have $\mathbf{p}=\left(\begin{matrix} \mathbf{p}^{(1)}\\ \vdots \\ \mathbf{p}^{(s)} \end{matrix}\right)$,
where $\mathbf{p}^{(i)}\in\mathcal{V}^{(i)}$ for all $i\in[s]$.
Then we obtain
\begin{align}\label{shi2.6}\notag
\prod_{\mathbf{p}\in\mathcal{V}^{H}}(\lambda-d_{H}(w)+\sum_{e\in E_{H}(w)}\mathbf{p}_{e\setminus\{w\}})^{m(\mathbf{p})}
&=\prod_{\mathbf{p}\in\mathcal{V}^{H}}(\lambda-d_{H}(w)+\sum_{\substack{e\in E_{i}(w)\\i\in[s]}}\mathbf{p}_{e\setminus\{w\}})^{m(\mathbf{p})}\\
&=\prod_{\substack{\mathbf{p}^{(i)}\in\mathcal{V}^{(i)}\\i\in[s]}}(\lambda-\sum_{i=1}^{s}d_{\widetilde{H}_{i}}(w)+\sum_{\substack{e\in E_{i}(w)\\i\in[s]}}\mathbf{p}^{(i)}_{e\setminus\{w\}})^{\prod_{i=1}^{s}m(\mathbf{p}^{(i)})}.
\end{align}

Substituting (\ref{shi2.5}) and (\ref{shi2.6}) into (\ref{shi2.4}),
the proof is completed.

\end{proof}

An edge on $k$-uniform hypergraph is called a pendant edge if it contains exactly $k-1$ vertices with degree one.
When $k$-uniform hypergraph $H$ has a pendant edge incident to $w$,
it implies that $w$ is a cut vertex on $H$ and one of the branches is the one-edge hypergraph.
We use Theorem \ref{dingli2.4} to give a more explicit expression for the Laplacian characteristic polynomial of uniform hypergraphs with pendant edges.

\begin{cor}\label{tuilun2.5}
Let $H$ be a $k$-uniform hypergraph with a pendant edge incident to the non-pendent vertex $w$,
and we define $\widetilde{H}$ as the $k$-uniform hypergraph obtained by removing the pendant edge and pendent vertices on it from $H$.
Then
\begin{align*}
\phi(\mathcal{L}_{H})=&(\lambda-1)^{(k-1)^{|V(\widetilde{H})|+k-1}}\phi(\mathcal{L}_{\widetilde{H}}(w))^{(k-1)^{k}}
\prod_{\mathbf{p}\in\mathcal{V}^{\widetilde{H}}}(\lambda-d_{\widetilde{H}}(w)-1+\sum_{e\in E_{\widetilde{H}}(w)}\mathbf{p}_{e\setminus\{w\}})^{m(\mathbf{p})K_{1}}\\
&\times\prod_{\mathbf{p}\in\mathcal{V}^{\widetilde{H}}}(\lambda-d_{\widetilde{H}}(w)-1+(\frac{-1}{\lambda-1})^{k-1}+\sum_{e\in E_{\widetilde{H}}(w)}\mathbf{p}_{e\setminus\{w\}})^{m(\mathbf{p})K_{2}},
\end{align*}
where $K_{1}=(k-1)^{k-1}-k^{k-2}$ and $K_{2}=k^{k-2}$.
\end{cor}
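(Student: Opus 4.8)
The plan is to apply Theorem \ref{dingli2.4} to $H$ with the cut vertex $w$, taking advantage of the fact that a pendant edge incident to $w$ makes $w$ a cut vertex whose branches split into two groups: one branch is the single edge $e_0$ of the pendant edge (a one-edge $k$-uniform hypergraph containing $w$ and $k-1$ pendent vertices), and the remaining branches together reconstitute $\widetilde{H}$ glued at $w$. First I would record the two ingredients that Theorem \ref{dingli2.4} needs for the pendant branch $\widetilde{H}_0 = e_0$: the quantity $\phi(\mathcal{L}_{\widetilde{H}_0}(w))$, which is the Laplacian characteristic polynomial of the $(k-1)$-dimensional principal subtensor on the pendent vertices, and this factors as $(\lambda-1)^{(k-1)^{k-2}}$ since those vertices have degree $1$ and are mutually non-adjacent after deleting $w$; and a description of the affine variety $\mathcal{V}^{\widetilde{H}_0}$ together with the value taken by the linear form $\lambda - d_{\widetilde{H}_0}(w) + \mathbf{p}_{e_0\setminus\{w\}}$ at each of its points. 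For the one-edge hypergraph, $f_{v} = \lambda x_v^{k-1} + \prod_{u\in e_0\setminus\{v,w\}} x_u$ for each pendent vertex $v$ (with $x_w=1$ substituted), so $\mathcal{V}^{\widetilde{H}_0}$ consists of the solutions of this small polynomial system; the key computation is to show that at each such point the product $\mathbf{p}_{e_0\setminus\{w\}} = \prod_{v\in e_0\setminus\{w\}} p_v$ equals either $0$ or $(-1/\lambda)^{?}$ — more precisely, tracking the contribution of $e_0$ through Theorem \ref{dingli2.4}, the relevant value of the pendant term is either $0$ (accounting for the $K_1 = (k-1)^{k-1} - k^{k-2}$ exponent) or $(-1/(\lambda-1))^{k-1}$ (accounting for the $K_2 = k^{k-2}$ exponent). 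This case split on the points of $\mathcal{V}^{\widetilde{H}_0}$, and the counting of how many points (with multiplicity) fall into each case, is what produces the two distinct product factors in the statement.

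Next I would assemble everything. Write $s$ for the number of branches of $H$ at $w$; grouping the $s-1$ non-pendant branches as a single cut-vertex decomposition of $\widetilde{H}$ at $w$ lets me replace $\prod_{i\neq 0}\phi(\mathcal{L}_{\widetilde{H}_i}(w))^{\cdots}$-type products and the corresponding affine-variety product by the single objects $\phi(\mathcal{L}_{\widetilde{H}}(w))$ and $\mathcal{V}^{\widetilde{H}}$, using exactly the identities (\ref{shi2.3}) and (\ref{shi2.5}) in reverse — that is, I would re-apply Lemma \ref{yinli2.2} (as in the proof of Theorem \ref{dingli2.4}) but now to the bipartition $\{\widetilde{H}_0\}$ versus $\{\widetilde{H}_i : i\neq 0\}$, so that the decomposition collapses to a two-branch version with branches $\widetilde{H}_0$ and (an artificial gluing representing) $\widetilde{H}$. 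Then the exponent bookkeeping is the crux: the $\phi(\mathcal{L}_{\widetilde{H}}(w))$ exponent $(k-1)^k$ comes from $(k-1)^{2-s+\sum_{j\neq i}|V(\widetilde{H}_j)|}$ specialized so that the "other branch" contributes $|V(\widetilde{H}_0)\setminus\{w\}| = k-1$ extra copies on top of the $k-1$ from the Poisson formula, i.e. $(k-1)^{1+(k-1)} = (k-1)^k$; and the free factor $(\lambda-1)^{(k-1)^{|V(\widetilde{H})|+k-1}}$ comes from $\phi(\mathcal{L}_{\widetilde{H}_0}(w))^{(k-1)^{1-s+\sum_{j\neq 0}|V(\widetilde{H}_j)|}} = \big((\lambda-1)^{(k-1)^{k-2}}\big)^{(k-1)^{|V(\widetilde{H})|-1+1}}$, the inner exponent $(k-1)^{k-2}$ multiplying out to total exponent $(k-1)^{k-2}\cdot(k-1)^{|V(\widetilde{H})|} = (k-1)^{|V(\widetilde{H})|+k-1}$ after checking $\sum_{j\neq 0}|V(\widetilde{H}_j)\setminus\{w\}| = |V(\widetilde{H})\setminus\{w\}| = |V(\widetilde{H})|-1$. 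I would verify each of these arithmetic identities carefully against Theorem \ref{dingli2.4}.

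Finally, for the two product factors over $\mathbf{p}\in\mathcal{V}^{\widetilde{H}}$, I would note that the linear form in Theorem \ref{dingli2.4} evaluated on a pair $(\mathbf{p}^{(0)}, \mathbf{p})$ with $\mathbf{p}^{(0)}\in\mathcal{V}^{\widetilde{H}_0}$ and $\mathbf{p}\in\mathcal{V}^{\widetilde{H}}$ is $\lambda - d_{\widetilde{H}_0}(w) - d_{\widetilde{H}}(w) + \mathbf{p}^{(0)}_{e_0\setminus\{w\}} + \sum_{e\in E_{\widetilde{H}}(w)}\mathbf{p}_{e\setminus\{w\}}$, and since $d_{\widetilde{H}_0}(w) = 1$ this has the shape $\lambda - d_{\widetilde{H}}(w) - 1 + (\text{pendant value}) + \sum_{e\in E_{\widetilde{H}}(w)}\mathbf{p}_{e\setminus\{w\}}$; splitting over the two possible pendant values $0$ and $(-1/(\lambda-1))^{k-1}$ and collecting the multiplicities $\sum_{\mathbf{p}^{(0)} \text{ of type } 0} m(\mathbf{p}^{(0)}) = K_1$ and $\sum_{\mathbf{p}^{(0)} \text{ of type } 2} m(\mathbf{p}^{(0)}) = K_2$ gives precisely the stated exponents $m(\mathbf{p})K_1$ and $m(\mathbf{p})K_2$. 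The main obstacle I anticipate is the explicit solution of the one-edge polynomial system defining $\mathcal{V}^{\widetilde{H}_0}$ and the careful count (with Bézout-type multiplicities) of how its $(k-1)^{k-1}$ total points distribute into the $\mathbf{p}_{e_0\setminus\{w\}}=0$ stratum versus the stratum where $\mathbf{p}_{e_0\setminus\{w\}} = (-1/(\lambda-1))^{k-1}$ — establishing that the latter stratum carries total multiplicity exactly $k^{k-2}$ (which also explains the appearance of $k^{k-2}$ in the hypertree conjecture) is the delicate step, and I expect it parallels the analogous computation for the adjacency characteristic polynomial of pendant-edge hypergraphs in \cite{chen2021reduction}.
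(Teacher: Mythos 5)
Your overall strategy is the same as the paper's: apply Theorem \ref{dingli2.4} to the cut vertex $w$ with one branch equal to the one-edge hypergraph $H'$ on the pendant edge and the rest collapsing to $\widetilde{H}$, then split the product over $\mathcal{V}^{H'}\times\mathcal{V}^{\widetilde{H}}$ according to the value of the pendant term. However, two points need attention.

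First, a concrete arithmetic error: you claim $\phi(\mathcal{L}_{H'}(w))=(\lambda-1)^{(k-1)^{k-2}}$. Since $\mathcal{L}_{H'}(w)$ is the $k$-order, $(k-1)$-dimensional identity tensor, its characteristic polynomial has degree $(k-1)\cdot(k-1)^{k-2}=(k-1)^{k-1}$, so the correct value is $(\lambda-1)^{(k-1)^{k-1}}$. Your own bookkeeping betrays the slip: $(k-1)^{k-2}\cdot(k-1)^{|V(\widetilde{H})|}=(k-1)^{|V(\widetilde{H})|+k-2}$, not the $(k-1)^{|V(\widetilde{H})|+k-1}$ appearing in the statement; with the corrected inner exponent $(k-1)^{k-1}$ everything matches.

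Second, and more substantively, the step you yourself flag as the ``delicate'' one --- determining that $\mathbf{q}_{e'\setminus\{w\}}$ takes only the two values $0$ and $(\frac{-1}{\lambda-1})^{k-1}$ on $\mathcal{V}^{H'}$, with total multiplicities $K_1=(k-1)^{k-1}-k^{k-2}$ and $K_2=k^{k-2}$ --- is left unproved in your proposal; you defer it to an anticipated B\'ezout-type stratification of the one-edge variety. This is the heart of the corollary, so as written the argument is incomplete. The paper avoids any direct analysis of that variety: it quotes the known formula $\phi(\mathcal{L}_{H'})=(\lambda-1)^{k(k-1)^{k-1}-k^{k-1}}((\lambda-1)^{k}+(-1)^{k-1})^{k^{k-2}}$ from \cite[Theorem 4.2]{zheng2023zero} and applies Lemma \ref{yinli2.3} to $H'$ itself, giving
\begin{align*}
\prod_{\mathbf{q}\in\mathcal{V}^{H'}}\bigl(\lambda-1+\mathbf{q}_{e'\setminus\{w\}}\bigr)^{m(\mathbf{q})}
=\frac{\phi(\mathcal{L}_{H'})}{\phi(\mathcal{L}_{H'}(w))^{k-1}}
=(\lambda-1)^{(k-1)^{k-1}-k^{k-2}}\Bigl(\lambda-1+\bigl(\tfrac{-1}{\lambda-1}\bigr)^{k-1}\Bigr)^{k^{k-2}},
\end{align*}
from which the two values and their multiplicities are read off directly. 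If you adopt this device your outline closes up into a complete proof; without it (or an actual execution of the stratification you sketch), the exponents $K_1$ and $K_2$ remain asserted rather than established.
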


\begin{proof}

Clearly, $w$ is a cut vertex on $H$.
Suppose that the branches of $H$ associated with $w$ are $\widetilde{H}$ and the one-edge hypergraph with $k$ vertices, denoted by $H'$.
By Theorem \ref{dingli2.4}, we have
\begin{align}\label{shi2.7}\notag
\phi(\mathcal{L}_{H})=&\phi\left(\mathcal{L}_{\widetilde{H}}(w)\right)^{(k-1)^{k}}\phi\left(\mathcal{L}_{H'}(w)\right)^{(k-1)^{|V(\widetilde{H})|}}\\
&\times\prod_{\substack{\mathbf{p}\in\mathcal{V}^{\widetilde{H}}\\\mathbf{q}\in\mathcal{V}^{H'}}}
(\lambda-d_{\widetilde{H}}(w)-1+\mathbf{q}_{e'\setminus\{w\}}+\sum_{e\in E_{\widetilde{H}}(w)}\mathbf{p}_{e\setminus\{w\}})^{m(\mathbf{p})m(\mathbf{q})},
\end{align}
where $e'$ is the edge of $H'$.

Since $\mathcal{L}_{H'}(w)$ is a $k$-order $k-1$-dimensional identity tensor for the one-edge hypergraph $H'$,
we get
\begin{align}\label{shi2.8}
\phi(\mathcal{L}_{H'}(w))=(\lambda-1)^{(k-1)^{k-1}}.
\end{align}
It is shown that the Laplacian characteristic polynomial of $H'$ is
$\phi(\mathcal{L} _{H'})=(\lambda-1)^{k(k-1)^{k-1}-k^{k-1}}((\lambda-1)^{k}+(-1)^{k-1})^{k^{k-2}}$ in the \cite[Theorem 4.2]{zheng2023zero}.
It follows from (\ref{shi2.1}) that
\begin{align*}
\prod_{\mathbf{q}\in\mathcal{V}^{H'}}(\lambda-1+\mathbf{q}_{e'\setminus\{w\}})^{m(\mathbf{q})}&=\frac{\phi(\mathcal{L}_{H'})}{\phi\left(\mathcal{L}_{H'}(w)\right)^{k-1}}\\
&=(\lambda-1)^{(k-1)^{k-1}-k^{k-2}}(\lambda-1+(\frac{-1}{\lambda-1})^{k-1})^{k^{k-2}}.
\end{align*}
Then we have
\begin{align}\label{shi2.9}
\mathbf{q}_{e'\setminus\{w\}}=\begin{cases}
0,& \text{if $\mathbf{q}=\mathbf{0}$,}\\
(\frac{-1}{\lambda-1})^{k-1},& \text{if $\mathbf{q}\neq\mathbf{0}$,}
\end{cases}
\end{align}
for $\mathbf{q}\in\mathcal{V}^{H'}$,
and we have $m(\mathbf{0})=(k-1)^{k-1}-k^{k-2}$, $\sum_{\mathbf{0}\neq\mathbf{q}\in\mathcal{V}^{H'}}m(\mathbf{q})=k^{k-2}$ for $\mathbf{0}\in\mathcal{V}^{H'}$.
By (\ref{shi2.9}), the equation in (\ref{shi2.7}) is derived as follows:
\begin{align}\label{shi2.10}\notag
&\prod_{\substack{\mathbf{p}\in\mathcal{V}^{\widetilde{H}}\\\mathbf{q}\in\mathcal{V}^{H'}}}(\lambda-d_{\widetilde{H}}(w)-1+\mathbf{q}_{e'\setminus\{w\}}+\sum_{e\in E_{\widetilde{H}}(w)}\mathbf{p}_{e\setminus\{w\}})^{m(\mathbf{p})m(\mathbf{q})}\\ \notag
=&\prod_{\substack{\mathbf{p}\in\mathcal{V}^{\widetilde{H}}\\ \mathbf{0}=\mathbf{q}\in\mathcal{V}^{H'}}}(\lambda-d_{\widetilde{H}}(w)-1+\mathbf{q}_{e'\setminus\{w\}}+\sum_{e\in E_{\widetilde{H}}(w)}\mathbf{p}_{e\setminus\{w\}})^{m(\mathbf{p})m(\mathbf{q})}\\ \notag
&\times\prod_{\substack{\mathbf{p}\in\mathcal{V}^{\widetilde{H}}\\ \mathbf{0}\neq\mathbf{q}\in\mathcal{V}^{H'}}}(\lambda-d_{\widetilde{H}}(w)-1+\mathbf{q}_{e'\setminus\{w\}}+\sum_{e\in E_{\widetilde{H}}(w)}\mathbf{p}_{e\setminus\{w\}})^{m(\mathbf{p})m(\mathbf{q})}\\ \notag
=&\prod_{\mathbf{p}\in\mathcal{V}^{\widetilde{H}}}(\lambda-d_{\widetilde{H}}(w)-1+\sum_{e\in E_{\widetilde{H}}(w)}\mathbf{p}_{e\setminus\{w\}})^{m(\mathbf{p})((k-1)^{k-1}-k^{k-2})}\\
&\times\prod_{\mathbf{p}\in\mathcal{V}^{\widetilde{H}}}(\lambda-d_{\widetilde{H}}(w)-1+(\frac{-1}{\lambda-1})^{k-1}+\sum_{e\in E_{\widetilde{H}}(w)}\mathbf{p}_{e\setminus\{w\}})^{m(\mathbf{p})k^{k-2}}.
\end{align}

Substituting (\ref{shi2.8}) and (\ref{shi2.10}) into (\ref{shi2.7}),
the proof is completed.

\end{proof}

\section{The Laplacian matching polynomial of uniform hypergraphs}\label{section3}

Let $H=(V(H),E(H))$ be a $k$-uniform hypergraph.
Let $M$ be a sub-set of $E(H)$.
Denote by $V(M)$ the set of vertices of $H$ each of which is an endpoint of one of the edges in $M$.
If no two distinct edges in $M$ share a common vertex,
then $M$ is called a matching of $H$.
The set of matchings (including the empty set) of $H$ is denoted by $\mathcal{M}(H)$.
Let $\mathbf{w}:V(H)\cup E(H)\rightarrow \mathbb{C}$ be a weighting function on $H$.
In 2022, Wan et al. \cite{wan2022spectra} defined the weighted matching polynomial of $H$ as
\begin{align*}
\sum_{M\in\mathcal{M}(H)}(-1)^{|M|}\prod_{e\in M}\mathbf{w}(e)^{k}
\prod_{v\in V(H)\setminus V(M)}(\lambda-\mathbf{w}(v)).
\end{align*}
For any sub-hypergraph $\widetilde{H}$ of $H$,
if we choose the weighting function on $\widetilde{H}$ such that $\mathbf{w}(v) = d_{H}(v)$ for all $v\in V(\widetilde{H})$ and $\mathbf{w}(e)=-1$ for all $e\in E(\widetilde{H})$,
then the weighted matching polynomial of $\widetilde{H}$ can be derived as
\begin{align}\label{shi3.1}
\sum_{M\in\mathcal{M}(\widetilde{H})}(-1)^{(k-1)|M|}\prod_{v\in V(\widetilde{H})\setminus V(M)}(\lambda-d_{H}(v))=:\varphi_{H}(\widetilde{H}).
\end{align}
In \cite{wan2022spectra}, the polynomial (\ref{shi3.1}) is called the Laplacian matching polynomial of $\widetilde{H}$ with respect to $H$.

The goal of this section is to characterize some properties on the Laplacian matching polynomial of uniform hypergraphs,
which will be used to prove the main results in Section \ref{section4}.

Firstly, we introduce some related notation.
For a sub-set $S\subseteq V(H)$,
we use $H-S$ to denote the hypergraph obtained from $H$ by deleting the vertices in $S$ and the edges incident to them.
For a sub-set $I\subseteq E(H)$,
let $H\setminus I$ denote the hypergraph obtained from $H$ by deleting the edges in $I$ (no deletion of resultant isolated vertices).
When $S=\{v\}$ and $I=\{e\}$,
$H-S$ and $H\setminus I$ are simply written as $H-v$ and $H\setminus e$, respectively.

\begin{thm}\label{dingli3.1}
Let $H$ be a $k$-uniform hypergraph,
and $\widetilde{H}$ be a sub-hypergraph of $H$.
Then the following statements hold. \\
(1) If $\widetilde{H}$ is not connected and its connected components is $\widetilde{H}_{1}$ and $\widetilde{H}_{2}$,
then $\varphi_{H}(\widetilde{H})=\varphi_{H}(\widetilde{H}_{1})\varphi_{H}(\widetilde{H}_{2})$; \\
(2) For $e\in E(\widetilde{H})$,
we have $\varphi_{H}(\widetilde{H})=\varphi_{H}(\widetilde{H}\setminus e)+(-1)^{k-1}\varphi_{H}(\widetilde{H}-V(e))$; \\
(3) For $v\in V(\widetilde{H})$ and $I\subseteq E_{\widetilde{H}}(v)$, we have
\begin{align*}
\varphi_{H}(\widetilde{H})=\varphi_{H}(\widetilde{H}\setminus I)+(-1)^{k-1}\sum_{e\in I}\varphi_{H}(\widetilde{H}-V(e)),
\end{align*}
and
\begin{align*}
\varphi_{H}(\widetilde{H})=(\lambda-d_{H}(v))\varphi_{H}(\widetilde{H}-v)+(-1)^{k-1}\sum_{e\in E_{\widetilde{H}}(v)}\varphi_{H}(\widetilde{H}-V(e));
\end{align*}
(4) $\frac{\mathrm{d}}{\mathrm{d}\lambda}\varphi_{H}(\widetilde{H})=\sum_{v\in V(\widetilde{H})}\varphi_{H}(\widetilde{H}-v)$.
\end{thm}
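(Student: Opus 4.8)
The plan is to prove the four statements directly from the combinatorial definition
\[
\varphi_{H}(\widetilde{H})=\sum_{M\in\mathcal{M}(\widetilde{H})}(-1)^{(k-1)|M|}\prod_{v\in V(\widetilde{H})\setminus V(M)}(\lambda-d_{H}(v)),
\]
by partitioning the set of matchings $\mathcal{M}(\widetilde{H})$ according to how each matching interacts with the distinguished edge, vertex, or edge-set in question. The only genuine input is this formula; everything else is bookkeeping about matchings, so I expect no serious obstacle — the main care needed is in statement (3), where the two edge-deletion and vertex-deletion recursions must be reconciled and the sum $\sum_{e\in I}\varphi_H(\widetilde H-V(e))$ must be shown to count each relevant matching exactly once.

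For (1), since $\widetilde H_1$ and $\widetilde H_2$ are vertex-disjoint, every matching $M\in\mathcal{M}(\widetilde H)$ splits uniquely as $M=M_1\cup M_2$ with $M_i\in\mathcal{M}(\widetilde H_i)$, and $|M|=|M_1|+|M_2|$, $V(\widetilde H)\setminus V(M)$ is the disjoint union of $V(\widetilde H_i)\setminus V(M_i)$; the sum over $\mathcal{M}(\widetilde H)$ therefore factors as a product of the two sums, giving $\varphi_H(\widetilde H)=\varphi_H(\widetilde H_1)\varphi_H(\widetilde H_2)$. For (2), fix $e\in E(\widetilde H)$ and split $\mathcal{M}(\widetilde H)$ into matchings not containing $e$ and matchings containing $e$. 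Matchings avoiding $e$ are exactly $\mathcal{M}(\widetilde H\setminus e)$, and they contribute $\varphi_H(\widetilde H\setminus e)$ (note the vertex set and degrees are unchanged since $\widetilde H\setminus e$ retains all vertices of $\widetilde H$). A matching $M$ with $e\in M$ has the form $M=\{e\}\cup M'$ with $M'\in\mathcal{M}(\widetilde H-V(e))$; then $(-1)^{(k-1)|M|}=(-1)^{k-1}(-1)^{(k-1)|M'|}$ and $V(\widetilde H)\setminus V(M)=V(\widetilde H-V(e))\setminus V(M')$, so these matchings contribute $(-1)^{k-1}\varphi_H(\widetilde H-V(e))$; summing gives the identity.

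For (3), the first formula follows by iterating (2) one edge of $I$ at a time, or more directly: a matching $M\in\mathcal{M}(\widetilde H)$ either contains no edge of $I$ — and these are precisely $\mathcal{M}(\widetilde H\setminus I)$ — or contains exactly one edge $e\in I$ (it cannot contain two, since all edges of $I$ meet at $v$), and for each such $e$ the matchings with $e\in M$ contribute $(-1)^{k-1}\varphi_H(\widetilde H-V(e))$ exactly as in (2); adding over $e\in I$ gives the claim. The second formula is the case $I=E_{\widetilde H}(v)$ of the first together with the observation that $\widetilde H\setminus E_{\widetilde H}(v)$ has $v$ as an isolated vertex, so by (1) (with $\widetilde H_1=\{v\}$) $\varphi_H(\widetilde H\setminus E_{\widetilde H}(v))=(\lambda-d_H(v))\,\varphi_H(\widetilde H-v)$, where I use $\varphi_H$ of a single isolated vertex $v$ equals $\lambda-d_H(v)$. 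Finally, for (4) I differentiate the defining sum term by term: $\frac{\mathrm d}{\mathrm d\lambda}\prod_{v\in V(\widetilde H)\setminus V(M)}(\lambda-d_H(v))=\sum_{u\in V(\widetilde H)\setminus V(M)}\prod_{v\in V(\widetilde H)\setminus V(M),\,v\neq u}(\lambda-d_H(v))$, and then swap the order of summation over $M$ and over $u$. Grouping the terms by $u$, the matchings $M$ with $u\notin V(M)$ are exactly the matchings of $\widetilde H-u$, and for such $M$ the product $\prod_{v\in V(\widetilde H-u)\setminus V(M)}(\lambda-d_H(v))$ with the correct sign is precisely the $M$-term of $\varphi_H(\widetilde H-u)$; hence $\frac{\mathrm d}{\mathrm d\lambda}\varphi_H(\widetilde H)=\sum_{u\in V(\widetilde H)}\varphi_H(\widetilde H-u)$, as required.
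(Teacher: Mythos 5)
Your proof is correct and follows essentially the same route as the paper: partitioning matchings by their interaction with the distinguished edge, edge set, or vertex, and term-by-term differentiation for (4). The only cosmetic difference is that in (3) you also offer a direct one-step decomposition (using that at most one edge of $I$ can lie in a matching since all edges of $I$ contain $v$) in place of the paper's iteration of (2); both are sound.
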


\begin{proof}

(1) For any $M\in\mathcal{M}(\widetilde{H})$,
there exists $M_{1}\in\mathcal{M}(\widetilde{H}_{1})$ and $M_{2}\in\mathcal{M}(\widetilde{H}_{2})$ such that $M=M_{1}\cup M_{2}$.
It is easy to check that $\varphi_{H}(\widetilde{H})=\varphi_{H}(\widetilde{H}_{1})\varphi_{H}(\widetilde{H}_{2})$.

(2) For any $M\in\mathcal{M}(\widetilde{H})$,
if $M$ does not contain edge $e$,
then $M$ is a matching of $\widetilde{H}\setminus e$;
if $M$ contain edge $e$,
then $M\setminus\{e\}$ is a matching of $\widetilde{H}-V(e)$.
Thus, we have
\begin{align*}
\varphi_{H}(\widetilde{H})=&\sum_{e\notin M\in\mathcal{M}(\widetilde{H})}(-1)^{(k-1)|M|}\prod_{v\in V(\widetilde{H})\setminus V(M)}(\lambda-d_{H}(v))\\
&+\sum_{e\in M\in\mathcal{M}(\widetilde{H})}(-1)^{(k-1)|M|}\prod_{v\in V(\widetilde{H})\setminus V(M)}(\lambda-d_{H}(v))\\
=&\sum_{M\in\mathcal{M}(\widetilde{H}\setminus e)}(-1)^{(k-1)|M|}\prod_{v\in V(\widetilde{H}\setminus e)\setminus V(M)}(\lambda-d_{H}(v))\\
&+\sum_{M\setminus\{e\}\in\mathcal{M}(\widetilde{H}-V(e))}(-1)^{(k-1)(|M\setminus\{e\}|+1)}\prod_{v\in V(\widetilde{H}-V(e))\setminus V\left(M\setminus\{e\}\right)}(\lambda-d_{H}(v))\\
=&\varphi_{H}(\widetilde{H}\setminus e)+(-1)^{k-1}\varphi_{H}(\widetilde{H}-V(e)).
\end{align*}

(3) Suppose that $I=\{e_{1},\ldots,e_{s}\}$.
It follows from Theorem \ref{dingli3.1} (2) that
\begin{align*}
\varphi_{H}(\widetilde{H})&=\varphi_{H}(\widetilde{H}\setminus e_{1})+(-1)^{k-1}\varphi_{H}(\widetilde{H}-V(e_{1}))\\
&=\varphi_{H}(\widetilde{H}\setminus\{e_{1},e_{2}\})+(-1)^{k-1}\varphi_{H}(\widetilde{H}\setminus e_{1}-V(e_{2}))+(-1)^{k-1}\varphi_{H}(\widetilde{H}-V(e_{1})).
\end{align*}
Since $\widetilde{H}\setminus e_{1}-V(e_{2})=\widetilde{H}-V(e_{2})$,
we have
\begin{align*}
\varphi_{H}(\widetilde{H})=\varphi_{H}(\widetilde{H}\setminus\{e_{1},e_{2}\})+(-1)^{k-1}\varphi_{H}(\widetilde{H}-V(e_{2}))+(-1)^{k-1}\varphi_{H}(\widetilde{H}-V(e_{1})).
\end{align*}
Repeatedly using Theorem \ref{dingli3.1} (2), we get
\begin{align}\label{shi3.2}
\varphi_{H}(\widetilde{H})=\varphi_{H}(\widetilde{H}\setminus I)+(-1)^{k-1}\sum_{e\in I}\varphi_{H}(\widetilde{H}-V(e)).
\end{align}
When $I=E_{\widetilde{H}}(v)$,
the vertex $v$ is an isolated vertex on $H\setminus I$.
By (\ref{shi3.2}) and Theorem \ref{dingli3.1} (1), we thus have that
\begin{align*}
\varphi_{H}(\widetilde{H})=(\lambda-d_{H}(v))\varphi_{H}(\widetilde{H}-v)+(-1)^{k-1}\sum_{e\in E_{\widetilde{H}}(v)}\varphi_{H}(\widetilde{H}-V(e)).
\end{align*}

(4) By (\ref{shi3.1}), we have
\begin{align}\label{shi3.3}\notag
\frac{\mathrm{d}}{\mathrm{d}\lambda}\varphi_{H}(\widetilde{H})&=
\sum_{M\in\mathcal{M(\widetilde{H})}}\sum_{v\in V(\widetilde{H})\setminus V(M)}(-1)^{(k-1)|M|}\prod_{v\neq u\in V(\widetilde{H})\setminus V(M)}(\lambda-d_{H}(u))\\
&=\sum_{M\in\mathcal{M(\widetilde{H})}}\sum_{v\in V(\widetilde{H})\setminus V(M)}(-1)^{(k-1)|M|}\prod_{u\in V(\widetilde{H}-v)\setminus V(M)}(\lambda-d_{H}(u)).
\end{align}
For any $v\in V(\widetilde{H})$,
a matching of $\widetilde{H}$ without $v$ is a matching of $\widetilde{H}-v$.
So $\mathcal{M}(\widetilde{H}-v)$ can be seen as the set of all matchings without $v$ in $\widetilde{H}$.
From (\ref{shi3.3}), we obtain
\begin{align*}
\frac{\mathrm{d}}{\mathrm{d}\lambda}\varphi_{H}(\widetilde{H})&=
\sum_{v\in V(\widetilde{H})}\sum_{M\in \mathcal{M}(\widetilde{H}-v)}(-1)^{(k-1)|M|}\prod_{u\in V(\widetilde{H}-v)\setminus V(M)}(\lambda-d_{H}(u))\\
&=\sum_{v\in V(\widetilde{H})}\varphi_{H}(\widetilde{H}-v).
\end{align*}

\end{proof}

Next, we will give a result about the zero roots of the Laplacian matching polynomial of uniform hypertrees.
For this we need a result about the eigenvalues of principal sub-tensor of Laplacian tensor and the relationship between the eigenvalue of weighted adjacency tensor and the weighted matching polynomial.

For a non-empty $S\subseteq V(H)$,
let $\mathcal{L}_{H}[S]=(l_{i_{1}\cdots i_{k}})$ denote the $k$-order $|S|$-dimensional principal sub-tensor of $\mathcal{L}_{H}$,
where $i_{1},\ldots,i_{k}\in S$.
When $S=V(H)\setminus\{v\}$,
$\mathcal{L}_{H}[S]$ is simply written as $\mathcal{L}_{H}(v)$.
A tensor is called a $\mathcal{Z}$-tensor if all of its off-diagonal entries are non-positive.
Clearly, $\mathcal{L}_{H}[S]$ is a $\mathcal{Z}$-tensor for any non-empty $S\subseteq V(H)$.
Applying some properties of $\mathcal{Z}$-tensor,
we obtain the following result.

\begin{lem}\label{yinli3.2}
Let $H$ be a uniform connected hypergraph.
For any non-empty proper sub-set $S\subset V(H)$,
the real eigenvalues of $\mathcal{L}_{H}[S]$ are all greater than zero.
\end{lem}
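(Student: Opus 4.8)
The plan is to exhibit $\mathcal{L}_{H}[S]$ as a strong M-tensor and then apply the elementary bound that every eigenvalue of a nonnegative tensor has modulus at most its spectral radius. Put $\Delta=\max_{v\in V(H)}d_{H}(v)$ and set $\mathcal{B}=\Delta\mathcal{I}-\mathcal{L}_{H}=(\Delta\mathcal{I}-\mathcal{D}_{H})+\mathcal{A}_{H}$, where $\mathcal{I}$ denotes the identity tensor of the appropriate order and dimension. Then $\mathcal{B}$ is a nonnegative tensor: its diagonal entries are $\Delta-d_{H}(v)\ge 0$, while its off-diagonal entries coincide with those of $\mathcal{A}_{H}$ and hence are nonnegative. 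Since $H$ is connected, the digraph associated with $\mathcal{B}$ (which contains the one associated with $\mathcal{A}_{H}$) is strongly connected, so $\mathcal{B}$ is weakly irreducible; moreover a one-line computation gives $\mathcal{B}\mathbf{1}^{k-1}=\Delta\mathbf{1}^{[k-1]}$. By the Perron--Frobenius theorem for weakly irreducible nonnegative tensors, $\mathbf{1}$ is (up to scaling) the unique positive eigenvector of $\mathcal{B}$ and its eigenvalue is the spectral radius, so $\rho(\mathcal{B})=\Delta$.

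Next I would restrict to $S$. The principal sub-tensor $\mathcal{B}[S]=\Delta\mathcal{I}-\mathcal{L}_{H}[S]$ is again nonnegative, and because $\mathcal{B}$ is weakly irreducible and $S$ is a proper non-empty subset of $V(H)$, passing to a proper principal sub-tensor strictly decreases the spectral radius: $\rho(\mathcal{B}[S])<\rho(\mathcal{B})=\Delta$. The point is the standard strict-monotonicity argument: if one had $\rho(\mathcal{B}[S])=\Delta$, then padding a nonnegative Perron eigenvector of $\mathcal{B}[S]$ with zeros on $V(H)\setminus S$ would produce a nonnegative nonzero vector $\widehat{\mathbf{y}}$ with $\mathcal{B}\widehat{\mathbf{y}}^{k-1}\ge\Delta\widehat{\mathbf{y}}^{[k-1]}$ componentwise yet with a vanishing coordinate, which is impossible for a weakly irreducible nonnegative tensor.

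To conclude, let $\lambda\in\mathbb{R}$ be any real eigenvalue of $\mathcal{L}_{H}[S]$, with eigenvector $\mathbf{x}\neq\mathbf{0}$. Then $\mathcal{B}[S]\mathbf{x}^{k-1}=\Delta\mathbf{x}^{[k-1]}-\lambda\mathbf{x}^{[k-1]}=(\Delta-\lambda)\mathbf{x}^{[k-1]}$, so $\Delta-\lambda$ is a real eigenvalue of the nonnegative tensor $\mathcal{B}[S]$; hence $\Delta-\lambda\le|\Delta-\lambda|\le\rho(\mathcal{B}[S])<\Delta$, which forces $\lambda>0$.

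The crux is the strict inequality $\rho(\mathcal{B}[S])<\Delta$: this is exactly where the connectedness of $H$ and the properness of $S$ are used, and it rests on the Perron--Frobenius theory for weakly irreducible nonnegative tensors (uniqueness of the positive Perron eigenvector together with strict monotonicity of the spectral radius under proper principal sub-tensors). The other ingredients — nonnegativity of $\mathcal{B}$, the identity $\mathcal{B}\mathbf{1}^{k-1}=\Delta\mathbf{1}^{[k-1]}$, and $|\mu|\le\rho(\mathcal{B}[S])$ for every eigenvalue $\mu$ of a nonnegative tensor — are routine.
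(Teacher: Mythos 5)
Your proof is correct, but it takes a genuinely different route from the paper's. You shift by the maximum degree to get the nonnegative tensor $\mathcal{B}=\Delta\mathcal{I}-\mathcal{L}_{H}$, identify $\rho(\mathcal{B})=\Delta$ from the all-ones eigenvector together with weak irreducibility, and then use strict monotonicity of the spectral radius under passing to a proper principal sub-tensor, $\rho(\mathcal{B}[S])<\rho(\mathcal{B})$, to force $\lambda=\Delta-\mu>0$ for every real eigenvalue $\lambda$ of $\mathcal{L}_{H}[S]$. The paper instead stays inside $\mathcal{Z}$-/M-tensor theory: it works with $\tau(\cdot)$, the minimum real part of the eigenvalues, uses the monotonicity $\tau(\mathcal{L}_{H}[U])\leq\tau(\mathcal{L}_{H}[S])$ for $U\supseteq S$, the identification of $\tau$ with the minimum H-eigenvalue for $\mathcal{Z}$-tensors, and the cited positivity of the least H-eigenvalue of the one-vertex-deleted sub-tensor $\mathcal{L}_{H}(v)$ for connected $H$. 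The two arguments are dual faces of the same theory: yours is more self-contained (only classical Perron--Frobenius theory for weakly irreducible nonnegative tensors is needed, and an arbitrary proper $S$ is handled in one step rather than through the chain $S\subseteq V(H)\setminus\{v\}$), while the paper's is shorter because it delegates the connectivity argument to known results. The one step you should shore up is the strict inequality $\rho(\mathcal{B}[S])<\rho(\mathcal{B})$: the zero-padding argument as you sketch it only yields the weak inequality, since at the coordinates where $\widehat{\mathbf{y}}$ vanishes the inequality $(\mathcal{B}\widehat{\mathbf{y}}^{k-1})_{i}\geq\Delta\widehat{y}_{i}^{k-1}$ is vacuous; the strictness really rests on the equality case of the Collatz--Wielandt characterization for weakly irreducible nonnegative tensors (any nonnegative $\mathbf{x}\neq\mathbf{0}$ with $\mathcal{B}\mathbf{x}^{k-1}\geq\rho(\mathcal{B})\mathbf{x}^{[k-1]}$ must be the strictly positive Perron vector). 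That is a standard, citable lemma --- it is exactly how one proves $\rho(\mathcal{A}_{H-v})<\rho(\mathcal{A}_{H})$ for connected $H$ --- so this is a matter of citation rather than a gap in substance.
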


\begin{proof}

For any non-empty proper sub-set $S\subset V(H)$,
let $\tau(\mathcal{L}_{H}[S])$ denote the minimum real part of all eigenvalues of $\mathcal{L}_{H}[S]$.
For a non-empty proper sub-set $U\subset V(H)$ satisfying $U\supseteq S$,
it is known that $\tau(\mathcal{L}_{H}[U])\leq\tau(\mathcal{L}_{H}[S])$ \citep[Theorem 3.1]{shen2019some}.
Thus, we have
\begin{align*}
\min_{v\in V(H)}\tau(\mathcal{L}_{H}(v))\leq\tau(\mathcal{L}_{H}[S]).
\end{align*}
By \citep[Proposition 2.4]{gowda2015z},
$\tau(\mathcal{L}_{H}(v))$ is the minimum $\mathrm{H}$-eigenvalue of $\mathcal{L}_{H}(v)$ for any $v\in V(H)$.
It is shown that the minimum $\mathrm{H}$-eigenvalue of $\mathcal{L}_{H}(v)$ is greater than zero for uniform connected hypergraph $H$ in \citep[Lemma 2.1 and Theorem 3.1]{bu2018Inverse}.
Then we have $\tau(\mathcal{L}_{H}(v))>0$.
Thus $0<\min_{v\in V(H)}\tau(\mathcal{L}_{H}(v))\leq\tau(\mathcal{L}_{H}[S])$,
which implies that the real eigenvalues of $\mathcal{L}_{H}[S]$ are all greater than zero.

\end{proof}

For a $k$-uniform hypergraph $H$ and the weighting function $\mathbf{w}:V(H)\cup E(H)\rightarrow \mathbb{C}$,
Wan et al. \cite{wan2022spectra} defined the weighted adjacency tensor $\mathcal{A}_{H,\mathbf{w}}=(a_{i_{1}\ldots i_{k}})$, where
\begin{equation*}
a_{i_{1}\cdots i_{k}}=\begin{cases}
\mathbf{w}(v) &\text{if $i_{1}=\cdots=i_{k}=v\in V(H)$}, \\
\frac{\mathbf{w}(e)}{(k-1)!} &\text{if $\{i_{1},\ldots,i_{k}\}=e\in E(H)$}, \\
0 &\text{otherwise}.
\end{cases}
\end{equation*}
They determined all eigenvalues of the weighted adjacency tensor of uniform hypertrees by means of the weighted matching polynomial.

\begin{lem}\citep[Theorem2]{wan2022spectra}\label{yinli3.3}
Let $T=(V(T),E(T))$ be a $k$-uniform hypertree for $k\geq3$.
Let $\mathbf{w}:V(T)\cup E(T)\rightarrow\mathbb{C}$ be a weighting function on $T$.
Then $\lambda$ is an eigenvalue of $\mathcal{A}_{T,\mathbf{w}}$ if and only if there exists a sub-hypertree $\widetilde{T}$ of $T$ (including isolated vertices) such that
$\lambda$ is a root of the weighted matching polynomial
\begin{align*}
\sum_{M\in\mathcal{M}(\widetilde{T})}(-1)^{|M|}\prod_{e\in M}\mathbf{w}(e)^{k}\prod_{v\in V(\widetilde{T})\setminus V(M)}(\lambda-\mathbf{w}(v)).
\end{align*}
\end{lem}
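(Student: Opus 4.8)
The plan is to prove the equivalence by induction on the number of edges $|E(T)|$, at each step peeling off a pendant edge. Throughout I write $\varphi_{\mathbf{w}}(\widetilde{T})$ for the weighted matching polynomial $\sum_{M\in\mathcal{M}(\widetilde{T})}(-1)^{|M|}\prod_{e\in M}\mathbf{w}(e)^{k}\prod_{v\in V(\widetilde{T})\setminus V(M)}(\lambda-\mathbf{w}(v))$ that appears in the statement, so $\varphi_{\mathbf{w}}(\{v\})=\lambda-\mathbf{w}(v)$ for a single vertex $v$. When $|E(T)|=0$ the hypertree $T$ is a single vertex $v$, $\mathcal{A}_{T,\mathbf{w}}=(\mathbf{w}(v))$ has the sole eigenvalue $\mathbf{w}(v)$, and $\varphi_{\mathbf{w}}(\{v\})$ has the sole root $\mathbf{w}(v)$, so the base case holds. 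For the inductive step, fix a pendant edge $e=\{w,u_{1},\ldots,u_{k-1}\}$ of $T$ with pendant vertices $u_{1},\ldots,u_{k-1}$ (every hypertree with at least one edge has such an edge), and set $T'=T-\{u_{1},\ldots,u_{k-1}\}$, a $k$-uniform hypertree with one fewer edge.

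The key is a reduction across $e$ on the spectral side. If $\mathcal{A}_{T,\mathbf{w}}\mathbf{x}^{k-1}=\lambda\mathbf{x}^{[k-1]}$ with $\mathbf{x}\neq\mathbf{0}$, the eigenvalue equation at $u_{i}$ reads $(\lambda-\mathbf{w}(u_{i}))x_{u_{i}}^{k-1}=\mathbf{w}(e)\,x_{w}\prod_{j\neq i}x_{u_{j}}$. Tracking which of $x_{w},x_{u_{1}},\ldots,x_{u_{k-1}}$ vanish leads to one of three situations: either $\lambda=\mathbf{w}(u_{i})$ for some $i$, i.e.\ $\lambda$ is a root of $\varphi_{\mathbf{w}}(\{u_{i}\})$; or all of $x_{u_{1}},\ldots,x_{u_{k-1}}$ vanish, in which case $\mathbf{x}$ restricts to a nonzero eigenvector of $\mathcal{A}_{T',\mathbf{w}}$ for the same $\lambda$; or $\lambda\neq\mathbf{w}(u_{i})$ for all $i$, in which case one solves $\mathbf{x}_{e\setminus\{w\}}=\prod_{i}x_{u_{i}}=\mathbf{w}(e)^{k-1}\bigl(\prod_{i}(\lambda-\mathbf{w}(u_{i}))\bigr)^{-1}x_{w}^{k-1}$ and the equation at $w$ collapses to the eigenvalue equation of $\mathcal{A}_{T',\mathbf{w}'}$, where $\mathbf{w}'$ agrees with $\mathbf{w}$ off $w$ and $\mathbf{w}'(w)=\mathbf{w}(w)+\mathbf{w}(e)^{k}\bigl(\prod_{i}(\lambda-\mathbf{w}(u_{i}))\bigr)^{-1}$. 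Conversely, an eigenvector of $\mathcal{A}_{T',\mathbf{w}}$ or $\mathcal{A}_{T',\mathbf{w}'}$ extends to one of $\mathcal{A}_{T,\mathbf{w}}$ by taking each $x_{u_{i}}$ to be a suitable scalar multiple of $x_{w}$ (or zero when $x_{w}=0$).

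On the combinatorial side, the weighted versions of the deletion recursions in Theorem \ref{dingli3.1}(2)--(3) (with $-\mathbf{w}(e)^{k}$ in place of $(-1)^{k-1}$, proved by the same splitting of the matching sum) give, for any sub-hypertree $\widetilde{T}$ of $T$ that contains $e$,
\begin{align*}
\varphi_{\mathbf{w}}(\widetilde{T})
&=\Bigl(\prod_{i}(\lambda-\mathbf{w}(u_{i}))\Bigr)\varphi_{\mathbf{w}}\bigl(\widetilde{T}-\{u_{1},\ldots,u_{k-1}\}\bigr)-\mathbf{w}(e)^{k}\,\varphi_{\mathbf{w}}\bigl(\widetilde{T}-V(e)\bigr)\\
&=\Bigl(\prod_{i}(\lambda-\mathbf{w}(u_{i}))\Bigr)\varphi_{\mathbf{w}'}\bigl(\widetilde{T}-\{u_{1},\ldots,u_{k-1}\}\bigr),
\end{align*}
the second line using $\widetilde{T}-V(e)=(\widetilde{T}-\{u_{1},\ldots,u_{k-1}\})-w$ and the vertex-$w$ expansion of $\varphi_{\mathbf{w}'}$. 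Hence, away from the values $\mathbf{w}(u_{1}),\ldots,\mathbf{w}(u_{k-1})$, the sub-hypertrees of $T$ with a root at $\lambda$ correspond to the sub-hypertrees of $T'$ with a root at $\lambda$ for the weighting $\mathbf{w}'$, and those exceptional values come precisely from the one-vertex sub-hypertrees $\{u_{i}\}$. Feeding the inductive hypothesis for $T'$ (with weighting $\mathbf{w}$ in the second situation above, $\mathbf{w}'$ in the third) into these two matched reductions closes both directions of the equivalence for $T$: a sub-hypertree of $T'$ whose matching polynomial vanishes at $\lambda$ is promoted, as it stands or by re-attaching $e$, to one of $T$, and vice versa.

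I expect the main obstacle to be the careful handling of the degenerate configurations on the spectral side --- when $x_{w}=0$, when two or more of the $x_{u_{i}}$ vanish, or when $\mathbf{w}(e)=0$ --- each of which must be steered back to a root of $\varphi_{\mathbf{w}}$ of some, possibly strictly smaller, sub-hypertree, using Theorem \ref{dingli3.1}(1) to split off connected components; dually, when extending a $T'$-eigenvector across $e$ one must verify the eigenvalue equations at the vertices of $T$ lying outside the chosen sub-hypertree, which relies on the fact that a connected sub-hypertree of a hypertree is an induced sub-hypergraph. An alternative route, more in the spirit of Section \ref{section2}, would be to iterate the Poisson formula (Lemma \ref{yinli2.1}) over pendant edges so as to factor the characteristic polynomial $\phi(\mathcal{A}_{T,\mathbf{w}})$ into powers of matching polynomials of sub-hypertrees, and then read off the eigenvalues as its roots.
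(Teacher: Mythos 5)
The paper does not prove this lemma: it is imported verbatim from \cite{wan2022spectra} (their Theorem 2) and used as a black box, so there is no in-paper argument to compare yours against; I can only assess your proposal on its own terms. Your pendant-edge induction is a viable, essentially self-contained route, and its two pillars both check out: the spectral reduction across a pendant edge $e=\{w,u_1,\ldots,u_{k-1}\}$ to $\mathcal{A}_{T',\mathbf{w}'}$ with $\mathbf{w}'(w)=\mathbf{w}(w)+\mathbf{w}(e)^{k}\prod_{i}(\lambda-\mathbf{w}(u_i))^{-1}$, and the matching combinatorial identity $\varphi_{\mathbf{w}}(\widetilde{T})=\prod_i(\lambda-\mathbf{w}(u_i))\,\varphi_{\mathbf{w}'}(\widetilde{T}-\{u_1,\ldots,u_{k-1}\})$, which follows from the weighted analogues of Theorem \ref{dingli3.1}(2)--(3) exactly as you say. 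Two points need to be written out rather than gestured at. First, $\mathbf{w}'(w)$ depends on $\lambda$, so the induction hypothesis must be invoked for each fixed $\lambda$ separately, and the displayed identity relating $\varphi_{\mathbf{w}}$ and $\varphi_{\mathbf{w}'}$ holds only at that particular value of $\lambda$ (and only when $\prod_i(\lambda-\mathbf{w}(u_i))\neq0$); as written it looks like a polynomial identity, which it is not. Second, in the converse direction the extension of a $T'$-eigenvector with $x_w\neq0$ across $e$ requires exhibiting nonzero scalars $c_1,\ldots,c_{k-1}$ with $c_i^{k-1}=\mathbf{w}(e)(\lambda-\mathbf{w}(u_i))^{-1}\prod_{j\neq i}c_j$; this system is solvable (prescribe $c_i^{k}=\mathbf{w}(e)P/(\lambda-\mathbf{w}(u_i))$ with $P=\mathbf{w}(e)^{k-1}\prod_j(\lambda-\mathbf{w}(u_j))^{-1}$ and correct one $c_i$ by a $k$-th root of unity so that $\prod_i c_i=P$), but it is precisely where the tensor structure and $k\geq3$ enter and it is the one step of the converse that is not routine. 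The degenerate configurations you flag ($x_w=0$, some $x_{u_i}=0$, $\mathbf{w}(e)=0$, $\lambda=\mathbf{w}(u_i)$) all resolve the way you expect, each landing on a smaller sub-hypertree or on the extension-by-zero, whose verification at the vertices $u_i$ uses only that $\prod_{j\neq i}x_{u_j}$ contains at least one vanishing factor when $k\geq3$. With those details supplied the argument is complete.
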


We are now ready to derive the result as follows.

\begin{thm}\label{dingli3.4}
Let $T$ be a $k$-uniform hypertree.
Then zero is a simple root of the polynomial $\varphi_{T}(T)$.
Moreover,
zero is not a root of the polynomial $\varphi_{T}(\widetilde{T})$ for any non-trivial sub-hypertree $\widetilde{T}$ of $T$.
\end{thm}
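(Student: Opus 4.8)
The plan is to establish, in order: (i) $\varphi_{T}(\widetilde{T})(0)\neq0$ for every sub-hypertree $\widetilde{T}\neq T$ of $T$ (this is the ``moreover'' part, and also the engine for the rest); (ii) $\varphi_{T}(T)(0)=0$; and (iii) this root is simple. We may assume $T$ has an edge, so that $d_{T}(v)\geq1$ for every $v$; otherwise $T$ is a single vertex, $\varphi_{T}(T)=\lambda$, and the claim is trivial.

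Two preliminary remarks. First, since $(-1)^{(k-1)|M|}=(-1)^{(k+1)|M|}$, the polynomial $\varphi_{T}(\widetilde{T})$ coincides with the weighted matching polynomial of $\widetilde{T}$ for the weighting $\mathbf{w}(v)=d_{T}(v)$, $\mathbf{w}(e)=-1$; for this weighting $\mathcal{A}_{\widetilde{T},\mathbf{w}}=\mathrm{diag}(d_{T}(v))_{v\in V(\widetilde{T})}-\mathcal{A}_{\widetilde{T}}$, and in particular $\mathcal{A}_{T,\mathbf{w}}=\mathcal{L}_{T}$. Second, a connected sub-hypergraph $C$ of $T$ is automatically induced: were some edge of $T$ contained in $V(C)$ but not an edge of $C$, adjoining it to $C$ would yield a connected sub-hypergraph of $T$ with $|E(C)|+1$ edges on the same $|V(C)|=1+|E(C)|(k-1)$ vertices, contradicting the vertex--edge count of $k$-uniform hypertrees (every connected sub-hypergraph of $T$ is again such a hypertree). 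Hence, for a connected $C\subseteq T$ with at least one edge, $\mathcal{A}_{C,\mathbf{w}}=\mathcal{L}_{T}[V(C)]$ for the weights above.

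For (i), let $\widetilde{T}\neq T$ be a sub-hypertree. By Theorem \ref{dingli3.1}(1), $\varphi_{T}(\widetilde{T})=\prod_{C}\varphi_{T}(C)$ over the connected components $C$ of $\widetilde{T}$, none of which equals $T$. I claim each factor is nonzero with $\varphi_{T}(C)(0)$ of sign $(-1)^{|V(C)|}$. If $C$ is a single vertex $u$ this is clear, as $\varphi_{T}(\{u\})(0)=-d_{T}(u)$. Otherwise $C$ has an edge, so $V(C)$ is a non-empty proper subset of $V(T)$; by Lemma \ref{yinli3.3} every root of $\varphi_{T}(C)$ is an eigenvalue of $\mathcal{A}_{C,\mathbf{w}}=\mathcal{L}_{T}[V(C)]$, and by Lemma \ref{yinli3.2} every real eigenvalue of $\mathcal{L}_{T}[V(C)]$ is strictly positive. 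Thus $\varphi_{T}(C)$, which is monic of degree $|V(C)|$, has no real root in $(-\infty,0]$, so on that interval its sign is constant and equals the sign of its leading term as $\lambda\to-\infty$, namely $(-1)^{|V(C)|}$; evaluating at $0$ proves the claim. Multiplying over components yields $\varphi_{T}(\widetilde{T})(0)\neq0$, and moreover, for every $v\in V(T)$, the components of $T-v$ are proper connected sub-hypergraphs of $T$, so $\varphi_{T}(T-v)(0)=\prod_{C}\varphi_{T}(C)(0)$ is nonzero with sign $(-1)^{\sum_{C}|V(C)|}=(-1)^{|V(T)|-1}$, independent of $v$.

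For (ii), the all-ones vector is a null eigenvector of $\mathcal{L}_{T}=\mathcal{A}_{T,\mathbf{w}}$, so $0$ is a Laplacian eigenvalue of $T$; by Lemma \ref{yinli3.3} some sub-hypertree $\widetilde{T}$ of $T$ satisfies $\varphi_{T}(\widetilde{T})(0)=0$, and by (i) it must be $T$ itself. For (iii), Theorem \ref{dingli3.1}(4) gives $\frac{\mathrm{d}}{\mathrm{d}\lambda}\varphi_{T}(T)\big|_{\lambda=0}=\sum_{v\in V(T)}\varphi_{T}(T-v)(0)$, a sum of nonzero terms all of the common sign $(-1)^{|V(T)|-1}$ by (i), hence nonzero; together with (ii) this shows $0$ is a simple root of $\varphi_{T}(T)$. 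The delicate step is (iii): mere nonvanishing of each $\varphi_{T}(T-v)(0)$ would not suffice, and what rules out cancellation in the sum is the sign control coming from the positivity of the real roots of the Laplacian matching polynomials of proper sub-hypertrees.
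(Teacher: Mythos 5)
Your proof is correct for $k\geq3$ and follows essentially the same route as the paper: identify $\varphi_{T}(\widetilde{T})$ with the weighted matching polynomial so that its roots are eigenvalues of the principal subtensor $\mathcal{L}_{T}[V(\widetilde{T})]$ via Lemma \ref{yinli3.3}, exclude zero for proper sub-hypertrees by Lemma \ref{yinli3.2}, and obtain simplicity from $\frac{\mathrm{d}}{\mathrm{d}\lambda}\varphi_{T}(T)=\sum_{v}\varphi_{T}(T-v)$ together with a sign argument showing the terms at $\lambda=0$ cannot cancel (you argue via the sign of each monic factor on $(-\infty,0]$, the paper via positivity of the product of all roots; these are equivalent). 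The only omission is the case $k=2$, which the paper handles separately because Lemma \ref{yinli3.3} is stated only for $k\geq3$.
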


\begin{proof}

When $k=2$,
$\varphi_{T}(T)$ is the Laplacian matching polynomial of tree $T$.
It is shown that $\varphi_{T}(T)$ is equal to the Laplacian characteristic polynomial of $T$ in the \citep[Theorem3.3]{mohammadian2020laplacian}.
Since zero is a simple root of the Laplacian characteristic polynomial of $T$,
zero is a simple root of $\varphi_{T}(T)$.
By \citep[Theorem 2.7]{wan2022Onthelocation}, for any non-trivial sub-tree $\widetilde{T}$ of $T$,
it is easy to check that $\varphi_{T}(\widetilde{T})$ is equal to the characteristic polynomial of the Laplacian principal sub-matrix $L_{T}(w)$ of $T$.
Since zero is not a root of the characteristic polynomial of $L_{T}(w)$,
zero is not a root of $\varphi_{T}(\widetilde{T})$.
In the following,
we consider the case $k\geq3$.

Clearly, for any sub-hypertree $\widetilde{T}$ of $T$,
if we choose the weighting function $\mathbf{w}$ on $\widetilde{T}$ such that $\mathbf{w}(v) = d_{T}(v)$ for all $v\in V(\widetilde{T})$ and $\mathbf{w}(e)=-1$ for all $e\in E(\widetilde{T})$,
then $\mathcal{A}_{\widetilde{T},\mathbf{w}}$ is exactly the principal sub-tensor $\mathcal{L}_{T}[V(\widetilde{T})]$ of $\mathcal{L}_{T}$,
and the weighted matching polynomial of $\widetilde{T}$ is exactly $\varphi_{T}(\widetilde{T})$.
It follows from Lemma \ref{yinli3.3} that the roots of $\varphi_{T}(\widetilde{T})$ is the eigenvalues of $\mathcal{L}_{T}[V(\widetilde{T})]$.
When $\widetilde{T}$ is a non-trivial sub-hypertree of $T$, by Lemma \ref{yinli3.2},
we know that zero is not the eigenvalue of $\mathcal{L}_{T}[V(\widetilde{T})]$,
which implies that zero is not a root of the polynomial $\varphi_{T}(\widetilde{T})$.
Since zero is a Laplacian eigenvalue of $T$, by \citep[Corollary4]{wan2022spectra},
there exists a sub-hypertree of $T$ such that zero is the root of the Laplacian matching polynomial of it with respect to $T$.
It is known that zero is not a root of $\varphi_{T}(\widetilde{T})$ for any non-trivial sub-hypertree $\widetilde{T}$ of $T$,
which implies that zero is a root of $\varphi_{T}(T)$.
Next, we prove that zero is a simple root of $\varphi_{T}(T)$.

By Theorem \ref{dingli3.1} (4), we have
\begin{align}\label{shi3.4}
\frac{\mathrm{d}}{\mathrm{d}\lambda}\varphi_{T}(T)=\sum_{v\in V(T)}\varphi_{T}(T-v).
\end{align}
Given a vertex $v\in V(T)$,
we know that $T-v$ is not connected and each connected component is sub-hypertree of $T$.
By Theorem \ref{dingli3.1} (1),
the roots of $\varphi_{T}(T-v)$ are the eigenvalues of $\mathcal{L}_{T}[V(T-v)]$.
By Lemma \ref{yinli3.2}, the real eigenvalues of $\mathcal{L}_{T}[V(T-v)]$ are all greater than zero,
which implies that all real roots of $\varphi_{T}(T-v)$ are greater than zero.
Note that $\varphi_{T}(T-v)$ is a real coefficient polynomial, whose all of imaginary part non-zero complex roots occur in pairs.
So the product of all roots of $\varphi_{T}(T-v)$ is greater than zero.
Let $\lambda_{1}^{(v)},\ldots,\lambda_{|V(T)|-1}^{(v)}$ denote the roots of $\varphi_{T}(T-v)$ for each $v\in V(T)$ and
we have $\lambda_{1}^{(v)}\cdots\lambda_{|V(T)|-1}^{(v)}>0$.
Then the constant term of the polynomial $\sum_{v\in V(T)}\varphi_{T}(T-v)$ is
$(-1)^{|V(T)|-1}\sum_{v\in V(T)}\lambda_{1}^{(v)}\cdots\lambda_{|V(T)|-1}^{(v)}\neq0$,
which implies that zero is not a root of $\sum_{v\in V(T)}\varphi_{T}(T-v)$.
By (\ref{shi3.4}), zero is not a root of $\frac{\mathrm{d}}{\mathrm{d}\lambda}\varphi_{T}(T)$.
Thus, zero is a simple root of $\varphi_{T}(T)$.

\end{proof}

\section{The multiplicity of the zero Laplacian eigenvalue of uniform hypertrees}\label{section4}

In this section, we apply the Laplacian characteristic polynomial and the Laplacian matching polynomial to
give the multiplicity of the zero Laplacian eigenvalue of uniform hypertrees,
which shows that Conjecture \ref{caixiang1.1} is true.

For a $k$-uniform hypertree $T=(V(T),E(T))$ and a vertex $w\in V(T)$,
recall that $F_{v}=F_{v}(x_{i}:i\in V(T))=(\lambda-d_{T}(v))x_{v}^{k-1}+\sum_{e\in E_{T}(v)}\mathbf{x}_{e\setminus\{v\}}$ and $f_{v}=F_{v}|_{x_{w}=1}$ for all $v\in V(T)$.
Let $\mathcal{V}^{T}$ be the affine variety defined by the polynomials $f_{v}$ for all $v\in V(T)\setminus\{w\}$.
By Lemma \ref{yinli2.1},
the Laplacian characteristic polynomial of $T$ is
\begin{align}\label{shi4.1}\notag
\phi(\mathcal{L}_{T})&=\phi(\mathcal{L}_{T}(w))^{k-1}\prod_{\mathbf{p}\in\mathcal{V}^{T}}(\lambda-d_{T}(w)+\sum_{e\in E_{T}(w)}\mathbf{p}_{e\setminus\{w\}})^{m(\mathbf{p})}\\
&=\phi(\mathcal{L}_{T}(w))^{k-1}\prod_{\mathbf{p}\in\mathcal{V}^{T}}f_{w}(\mathbf{p})^{m(\mathbf{p})}.
\end{align}
From Lemma \ref{yinli3.2},
we know that zero is not the eigenvalue of $\mathcal{L}_{T}(w)$.
Hence, in order to determine the multiplicity of the zero Laplacian eigenvalue of $T$,
we only need to consider $\prod_{\mathbf{p}\in\mathcal{V}^{T}}f_{w}(\mathbf{p})^{m(\mathbf{p})}$ in (\ref{shi4.1}).

Let $\mathbf{p}=(p_{i})$ be a point in affine variety $\mathcal{V}^{T}$,
and let $\mathbf{q}=(q_{i})$ be a $|V(T)|$-dimensional vector with components $q_{w}=1$ and $q_{i}=p_{i}$ for all $i\in V(T)\setminus\{w\}$.
Then we have
\begin{align*}
f_{w}(\mathbf{p})=F_{w}(q_{i}:i\in V(T))=F_{w}(\mathbf{q}),
\end{align*}
and $f_{v}(\mathbf{p})=F_{v}(q_{i}:i\in V(T))=F_{v}(\mathbf{q})=0$ for all $v\in V(T)\setminus\{w\}$.
When $\lambda=0$.
If $F_{w}(\mathbf{q})=0$,
then $\mathbf{q}$ is an eigenvector corresponding to the zero Laplacian eigenvalue of $T$.
It is shown that all components of the eigenvector corresponding to the zero Laplacian eigenvalue of a connected uniform hypergraph are non-zero in the \citep[Theorem 4.1 (i)]{hu2014eigenvectors}.
Therefore, the all components of $\mathbf{p}\in\mathcal{V}^{T}$ satisfying $f_{w}(\mathbf{p})=0$ are non-zero when $\lambda=0$.
It implies that the multiplicity of the zero Laplacian eigenvalue of $T$ is only related to the points having all components non-zero in $\mathcal{V}^{T}$.

\begin{lem}\label{yinli4.1}
Let $T$ be a $k$-uniform hypertree and $w$ be a vertex on $T$.
If $\mathbf{p}\in\mathcal{V}^{T}$ have all components non-zero,
then
\begin{align*}
\mathbf{p}_{e\setminus\{w\}}=\frac{(-1)^{k-1}\varphi_{T}(T-V(e))}{\varphi_{T}(T-w)}
\end{align*}
for each $e\in E_{T}(w)$.
\end{lem}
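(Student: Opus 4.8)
The plan is to show that the stated formula for $\mathbf{p}_{e\setminus\{w\}}$ follows from the defining equations $f_v(\mathbf{p})=0$ together with a Cramer-type solution of the associated linear system. First I would set $\lambda$ to be a generic (transcendental) parameter and work with the vector $\mathbf{q}$ defined in the paragraph preceding the lemma, so that $q_w=1$ and $F_v(\mathbf{q})=0$ for all $v\neq w$. Since $T$ is a hypertree, deleting $w$ decomposes $T-w$ into sub-hypertrees, and within each branch the equations $F_{v_i}(\mathbf{q})=0$ only couple the variables of that branch (with $q_w=1$ entering as a constant in the edges through $w$). The key observation is that, because every component $q_i$ is non-zero, each monomial $\mathbf{q}_{e\setminus\{v\}}$ can be rewritten using the substitution $y_v:=q_v^{k-1}$, turning the polynomial system into something whose structure mirrors the matching recursions of Theorem \ref{dingli3.1}.

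The main step is to evaluate $\mathbf{q}_{e\setminus\{w\}}=\prod_{v\in e\setminus\{w\}}q_v$ for a fixed edge $e\in E_T(w)$. I would proceed by induction on the branch $\widetilde{T}_e$ of $T$ at $w$ containing $e$, peeling off leaves. For a leaf edge, the equation $F_v(\mathbf{q})=0$ at a degree-one vertex $v$ gives $(\lambda-1)q_v^{k-1}+\mathbf{q}_{e\setminus\{v\}}=0$, which expresses one $q_v$-power in terms of the others; iterating this from the leaves inward toward $w$ yields a closed-form rational expression for $\mathbf{q}_{e\setminus\{w\}}$ in $\lambda$. I expect this expression to telescope into a ratio of two Laplacian matching polynomials: the numerator being $(-1)^{k-1}\varphi_T(T-V(e))$, which counts matchings of $T$ avoiding all of $e$ (hence all of the branch structure past $e$ is "cut off"), and the denominator $\varphi_T(T-w)$, which counts matchings of $T-w$. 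The sign $(-1)^{k-1}$ comes from the single factor contributed by the edge $e$ being forced into play, exactly as in Theorem \ref{dingli3.1}(2)--(3).

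To make the matching-polynomial identification rigorous, I would invoke the recursions in Theorem \ref{dingli3.1}(3): writing $\varphi_T(T-w)=(\lambda-d_T(v))\varphi_T(T-w-v)+(-1)^{k-1}\sum_{e'\in E_{T-w}(v)}\varphi_T(T-w-V(e'))$ for an appropriate vertex $v$ adjacent to $e$, and matching this term-by-term against the recursive elimination of $q$-variables in the polynomial system. Because $T$ is a tree, the elimination is unambiguous and the induction closes cleanly; the fact (guaranteed by Lemma \ref{yinli3.2} and Theorem \ref{dingli3.4}) that $\varphi_T(T-w)$ does not vanish at $\lambda=0$ and is not identically zero ensures the rational expressions are well-defined and that we are not dividing by zero.

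The hard part will be bookkeeping the combinatorial identification between the iterated substitutions coming from the $f_v=0$ equations and the matching recursions — in particular, verifying that the product over all leaves-to-$w$ eliminations reassembles precisely into $\varphi_T(T-V(e))/\varphi_T(T-w)$ rather than some other ratio of sub-hypertree matching polynomials, and tracking the powers and signs $(-1)^{k-1}$ correctly through each elimination step. Once the leaf-elimination recursion is phrased in the same language as Theorem \ref{dingli3.1}(2)--(3), this should reduce to an induction on $|V(\widetilde{T}_e)|$, but setting up the correct inductive hypothesis (which must simultaneously control $\mathbf{q}_{e\setminus\{w\}}$ for \emph{all} edges at $w$, or equivalently control $\mathbf{q}$ restricted to each branch) is where care is needed.
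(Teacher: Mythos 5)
Your overall architecture --- induct over the hypertree from the leaves toward $w$, use the non-vanishing of the components to normalize within each branch, and identify the resulting rational functions of $\lambda$ with ratios of Laplacian matching polynomials via Theorem \ref{dingli3.1}(3) --- matches the paper's, which proves the lemma by induction on $|E(T)|$, splitting into the case where $w$ is a cut vertex (reduce to branches via (\ref{shi2.3})) and the case $d_T(w)=1$ (peel off the unique edge $\widehat{e}=\{v_1,\ldots,v_{k-1},w\}$ at $w$). However, there is a genuine gap at the central algebraic step, precisely where you write that iterating the leaf equations ``yields a closed-form rational expression for $\mathbf{q}_{e\setminus\{w\}}$.'' The equations $f_v(\mathbf{p})=0$ determine only the powers $p_v^{k-1}$ and the edge monomials $\mathbf{p}_{e\setminus\{v\}}=\prod_{u\in e\setminus\{v\}}p_u$, never the individual coordinates $p_v$; in particular your proposed substitution $y_v=q_v^{k-1}$ does not transform the system into one in the $y_v$, because $\mathbf{q}_{e\setminus\{v\}}$ is a product of \emph{first} powers and is not a function of the $y_u$. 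A naive variable-by-variable elimination from the leaves inward therefore stalls immediately: after one step you know $q_v^{k-1}$ but need $q_v$ itself to substitute into the next equation, and this $k$-th-root ambiguity is exactly what makes the multiplicity $k^{|E(T)|(k-2)}$ rather than $1$.

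The paper resolves this with two specific moves that your proposal lacks. First, a homogeneity/rescaling argument: for each non-pendent vertex $v_t$ of $\widehat{e}$, the restriction of $\mathbf{p}$ to the component $\widehat{T}_t$ of $T\setminus\widehat{e}$, divided by $p_{v_t}$, is a point of $\mathcal{V}^{\widehat{T}_t}$ with all components non-zero and normalized at $v_t$; the induction hypothesis (applied with $v_t$ as the new distinguished vertex) then gives each $\mathbf{p}_{e'\setminus\{v_t\}}$ for $e'\in E_{\widehat{T}_t}(v_t)$ up to the unknown factor $p_{v_t}^{k-1}$, and the equation $f_{v_t}(\mathbf{p})=0$ together with Theorem \ref{dingli3.1}(3) yields $\mathbf{p}_{\widehat{e}\setminus\{v_t,w\}}=-\frac{\varphi_T(\widehat{T}_t)}{\varphi_T(\widehat{T}_t-v_t)}\,p_{v_t}^{k-1}$. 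Second, the multiplicative identity
\begin{align*}
\prod_{t=1}^{k-1}\mathbf{p}_{\widehat{e}\setminus\{v_t,w\}}=\prod_{t=1}^{k-1}p_{v_t}^{k-2},
\end{align*}
which lets one divide out all the unknown factors $p_{v_t}^{k-1}$ at once and solve for $\mathbf{p}_{\widehat{e}\setminus\{w\}}=\prod_{t=1}^{k-1}p_{v_t}$. Without this product trick (or an equivalent device) the ``telescoping'' you anticipate cannot be carried out, so you would need to build it into your inductive step explicitly.
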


\begin{proof}

We prove the result by the induction on the number of edges of $T$.

When $|E(T)|=1$, we have $\varphi_{T}(T-w)=(\lambda-1)^{k-1}$ and $\varphi_{T}(T-V(e))=1$ for the edge $e\in E_{T}(w)$.
From (\ref{shi2.9}), we know that $\mathbf{p}_{e\setminus\{w\}}=(\frac{-1}{\lambda-1})^{k-1}$,
which implies that
\begin{align*}
\mathbf{p}_{e\setminus\{w\}}=\frac{(-1)^{k-1}\varphi_{T}(T-V(e))}{\varphi_{T}(T-w)}.
\end{align*}
So the assertion holds.

Assuming that the result holds for any $|E(T)|\leq r$, we consider the case $|E(T)|=r+1$.

When $w$ is a cut vertex of $T$,
$T$ has $d_{T}(w)$$(>1)$ branches associated with $w$ and each $e\in E_{T}(w)$ belongs to a distinct branch.
Let $\widetilde{T}_{i}$ be the branch of $T$ with edge $e_{i}\in E_{T}(w)$ for each $i\in[d_{T}(w)]$
and we know that $|E(\widetilde{T}_{i})|\leq r$.
By the induction hypothesis,
for $\mathbf{p}^{(i)}\in\mathcal{V}^{\widetilde{T}_{i}}$ having all components non-zero, we have
\begin{align*}
\mathbf{p}^{(i)}_{e_{i}\setminus\{w\}}=\frac{(-1)^{k-1}\varphi_{\widetilde{T}_{i}}(\widetilde{T}_{i}-V(e_{i}))}{\varphi_{\widetilde{T}_{i}}(\widetilde{T}_{i}-w)}.
\end{align*}
By the definition of the Laplacian matching polynomial,
we have $\varphi_{\widetilde{T}_{i}}(\widetilde{T}_{i}-V(e_{i}))=\varphi_{T}(\widetilde{T}_{i}-V(e_{i}))$ and
$\varphi_{\widetilde{T}_{i}}(\widetilde{T}_{i}-w)=\varphi_{T}(\widetilde{T}_{i}-w)$.
Then
\begin{align}\label{shi4.2}\notag
\mathbf{p}^{(i)}_{e_{i}\setminus\{w\}}&=\frac{(-1)^{k-1}\varphi_{T}(\widetilde{T}_{i}-V(e_{i}))}{\varphi_{T}(\widetilde{T}_{i}-w)}\\
&=\frac{(-1)^{k-1}\varphi_{T}(\widetilde{T}_{i}-V(e_{i}))\prod_{\substack{j\in[d_{T}(w)]\\j\neq i}}\varphi_{T}(\widetilde{T}_{j}-w)}{\prod_{j\in[d_{T}(w)]}\varphi_{T}(\widetilde{T}_{j}-w)}.
\end{align}
Note that $T-w$ is the disjoint union of $\widetilde{T}_{i}-w$ for all $i\in[d_{T}(w)]$,
and $T-V(e_{j})$ is the disjoint union of $\widetilde{T}_{j}-V(e_{j})$ and $\widetilde{T}_{i}-w$ for all $i\neq j\in[d_{T}(w)]$.
It follows from Theorem \ref{dingli3.1} (1) that
\begin{align*}
\prod_{j\in[d_{T}(w)]}\varphi_{T}(\widetilde{T}_{j}-w)=\varphi_{T}(T-w),
\end{align*}
and
\begin{align*}
\varphi_{T}(\widetilde{T}_{i}-V(e_{i}))\prod_{\substack{j\in[d_{T}(w)]\\j\neq i}}\varphi_{T}(\widetilde{T}_{j}-w)=\varphi_{T}(T-V(e_{i})).
\end{align*}
By Theorem \ref{dingli2.4} and (\ref{shi4.2}),
for $\mathbf{p}\in\mathcal{V}^{T}$ having all components non-zero, we get
\begin{align*}
\mathbf{p}_{e_{i}\setminus\{w\}}=\mathbf{p}^{(i)}_{e_{i}\setminus\{w\}}=\frac{(-1)^{k-1}\varphi_{T}(T-V(e_{i}))}{\varphi_{T}(T-w)}.
\end{align*}

When $w$ is not a cut vertex of $T$,
the degree of $w$ is clearly one.
Let the edge $\widehat{e}=\{v_{1},\ldots,v_{k-1},w\}$.
Then $T\setminus\widehat{e}$ has $k$ connected components and we use $\widehat{T}_{t}$ to denote the connected
component containing $v_{t}$ for each $t\in[k]$.

For all $v\in V(T)$,
recall that $F_{v}=F_{v}(x_{i}:i\in V(T))=(\lambda-d_{T}(v))x_{v}^{k-1}+\sum_{e\in E_{T}(v)}\mathbf{x}_{e\setminus\{v\}}$ and $f_{v}=F_{v}|_{x_{w}=1}$.
For all $t\in[k-1]$ and any $v\in V(\widehat{T}_{t})\setminus\{v_{t}\}$,
note that $f_{v}=f_{v}(x_{i}:i\in V(\widehat{T}_{t}))$ is a homogeneous polynomial.
Since $\mathbf{p}=(p_{i})\in\mathcal{V}^{T}$ have all components non-zero,
we get
\begin{align}\label{shi4.3}
f_{v}(\mathbf{p})=f_{v}(p_{i}:i\in V(\widehat{T}_{t}))=f_{v}\left(\frac{p_{i}}{p_{v_{t}}}:i\in V(\widehat{T}_{t})\right)=0.
\end{align}

Fix $t\in[k-1]$,
we consider the sub-hypertree $\widehat{T}_{t}$.
For all $v\in V(\widehat{T}_{t})\setminus\{v_{t}\}$,
let $\widehat{F}_{v}=\widehat{F}_{v}(x_{i}:i\in V(\widehat{T}_{t}))=(\lambda-d_{\widehat{T}_{t}}(v))x_{v}^{k-1}+\sum_{e\in E_{\widehat{T}_{t}}(v)}\mathbf{x}_{e\setminus\{v\}}$ and $\widehat{f}_{v}=\widehat{F}_{v}|_{x_{v_{t}}=1}$.
It is easy to check that $\widehat{F}_{v}=f_{v}$.
Let $q_{i}=\frac{p_{i}}{p_{v_{t}}}$ for all $i\in V(\widehat{T}_{t})$ and note that $q_{v_{t}}=1$.
By (\ref{shi4.3}), we have
\begin{align}\label{shi4.4}
\widehat{f}_{v}(q_{i}:i\in V(\widehat{T}_{t})\setminus\{v_{t}\})=\widehat{F}_{v}(q_{i}:i\in V(\widehat{T}_{t}))=f_{v}(q_{i}:i\in V(\widehat{T}_{t}))=0
\end{align}
for all $v\in V(\widehat{T}_{t})\setminus\{v_{t}\}$.
Let the vector $\mathbf{q}=(q_{i})$ for $i\in V(\widehat{T}_{t})\setminus\{v_{t}\}$.
Then $\mathbf{q}$ is a point in the affine variety $\mathcal{V}^{\widehat{T}_{t}}$ defined by the polynomials $\widehat{f}_{v}$ for all $v\in V(\widehat{T}_{t})\setminus\{v_{t}\}$,
and the all components of $\mathbf{q}$ are non-zero.
By the induction hypothesis,
for each $e\in E_{\widehat{T}_{t}}(v_{t})$, we have
\begin{align*}
\mathbf{q}_{e\setminus\{v_{t}\}}=\frac{(-1)^{k-1}\varphi_{\widehat{T}_{t}}(\widehat{T}_{t}-V(e))}{\varphi_{\widehat{T}_{t}}(\widehat{T}_{t}-v_{t})}.
\end{align*}
By the definition of the Laplacian matching polynomial,
we have $\varphi_{\widehat{T}_{t}}(\widehat{T}_{t}-V(e))=\varphi_{T}(\widehat{T}_{t}-V(e))$ and
$\varphi_{\widehat{T}_{t}}(\widehat{T}_{t}-v_{t})=\varphi_{T}(\widehat{T}_{t}-v_{t})$.
Then
\begin{align*}
\mathbf{q}_{e\setminus\{v_{t}\}}=\frac{(-1)^{k-1}\varphi_{T}(\widehat{T}_{t}-V(e))}{\varphi_{T}(\widehat{T}_{t}-v_{t})}=\frac{\mathbf{p}_{e\setminus\{v_{t}\}}}{p_{v_{t}}^{k-1}}.
\end{align*}
Thus, for $\mathbf{p}\in\mathcal{V}^{T}$ having all components non-zero and each $e\in E_{\widehat{T}_{t}}(v_{t})$, we get
\begin{align}\label{shi4.5}
\mathbf{p}_{e\setminus\{v_{t}\}}=\frac{(-1)^{k-1}\varphi_{T}(\widehat{T}_{t}-V(e))}{\varphi_{T}(\widehat{T}_{t}-v_{t})}p_{v_{t}}^{k-1}.
\end{align}

For each $t\in[k-1]$, recall that
\begin{align*}
f_{v_{t}}(\mathbf{p})=(\lambda-d_{T}(v_{t}))p_{v_{t}}^{k-1}+\mathbf{p}_{\widehat{e}\setminus\{v_{t},w\}}+\sum_{e\in E_{\widehat{T}_{t}}(v_{t})}\mathbf{p}_{e\setminus\{v_{t}\}}=0.
\end{align*}
By (\ref{shi4.5}) and Theorem \ref{dingli3.1} (3), we have
\begin{align*}
\mathbf{p}_{\widehat{e}\setminus\{v_{t},w\}}&=-\left(\lambda-d_{T}(v_{t})+\sum_{e\in E_{\widehat{T}_{t}}(v_{t})}\frac{(-1)^{k-1}\varphi_{T}(\widehat{T}_{t}-V(e))}{\varphi_{T}(\widehat{T}_{t}-v_{t})}\right)p_{v_{t}}^{k-1}\\
&=-\frac{\varphi_{T}(\widehat{T}_{t})}{\varphi_{T}(\widehat{T}_{t}-v_{t})}p_{v_{t}}^{k-1}.
\end{align*}
Combining these equations for all $t\in[k-1]$, we get
\begin{align*}
\prod_{t=1}^{k-1}\mathbf{p}_{\widehat{e}\setminus\{v_{t},w\}}&=(-1)^{k-1}\prod_{t=1}^{k-1}\frac{\varphi_{T}(\widehat{T}_{t})}{\varphi_{T}(\widehat{T}_{t}-v_{t})}p_{v_{t}}^{k-1}.
\end{align*}
Since $\prod_{t=1}^{k-1}\mathbf{p}_{\widehat{e}\setminus\{v_{t},w\}}=\prod_{t=1}^{k-1}p_{v_{t}}^{k-2}$,
we have
\begin{align*}
\mathbf{p}_{\widehat{e}\setminus\{w\}}=\frac{\prod_{t=1}^{k-1}p_{v_{t}}^{k-1}}{\prod_{t=1}^{k-1}\mathbf{p}_{\widehat{e}\setminus\{v_{t},w\}}}=(-1)^{k-1}\prod_{t=1}^{k-1}\frac{\varphi_{T}(\widehat{T}_{t}-v_{t})}{\varphi_{T}(\widehat{T}_{t})}.
\end{align*}
Note that for all $t\in[k-1]$, the disjoint union of $\widehat{T}_{t}-v_{t}$ is $T-V(\widehat{e})$ and the disjoint union of $\widehat{T}_{t}$ is $T-w$.
It follows from Theorem \ref{dingli3.1} (1) that
\begin{align*}
\mathbf{p}_{\widehat{e}\setminus\{w\}}=\frac{(-1)^{k-1}\varphi_{T}(T-V(\widehat{e}))}{\varphi_{T}(T-w)}.
\end{align*}

\end{proof}

For the point $\mathbf{p}\in\mathcal{V}^{T}$, we have
$f_{w}(\mathbf{p})=\lambda-d_{T}(w)+\sum_{e\in E_{T}(w)}\mathbf{p}_{e\setminus\{w\}}$.
If $\mathbf{p}$ have all components non-zero, by Lemma \ref{yinli4.1} and Theorem \ref{dingli3.1} (3), we get
\begin{align}\label{shi4.6}\notag
f_{w}(\mathbf{p})&=\lambda-d_{T}(w)+\sum_{e\in E_{T}(w)}\frac{(-1)^{k-1}\varphi_{T}(T-V(e))}{\varphi_{T}(T-w)}\\
&=\frac{\varphi_{T}(T)}{\varphi_{T}(T-w)}.
\end{align}
Note that $T-w$ is not connected and each connected component is a non-trivial sub-hypertree of $T$.
From Theorem \ref{dingli3.1} (1) and Theorem \ref{dingli3.4},
we know that zero is not the root of $\varphi_{T}(T-w)$ and is a simple root of $\varphi_{T}(T)$.
Let $n_{0}(T)$ denote the multiplicity of the zero Laplacian eigenvalue of $T$.
Since $n_{0}(T)$ is only related to $\mathbf{p}$ having all components non-zero in $\mathcal{V}^{T}$,
combining (\ref{shi4.1}) with (\ref{shi4.6}), we have
\begin{align}\label{shi4.7}
n_{0}(T)=\sum_{\substack{\mathbf{p}\in\mathcal{V}^{T}\\\forall p_{i}\neq0}}m(\mathbf{p}),
\end{align}
where $m(\mathbf{p})$ is the multiplicity of $\mathbf{p}=(p_{i})$ in $\mathcal{V}^{T}$.

We are now ready to determine the multiplicity of the zero Laplacian eigenvalue of $T$.

\begin{thm}\label{dingli4.2}
Let $T=(V(T),E(T))$ be a $k$-uniform hypertree.
Then the multiplicity of the zero Laplacian eigenvalue of $T$ is $k^{|E(T)|(k-2)}$.
\end{thm}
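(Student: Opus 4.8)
The plan is to prove Theorem \ref{dingli4.2} by induction on $|E(T)|$, using the structural reduction at a cut vertex or pendant vertex that is already packaged in equation (\ref{shi4.7}) together with Lemma \ref{yinli4.1}. By (\ref{shi4.7}) we have $n_{0}(T)=\sum_{\mathbf{p}\in\mathcal{V}^{T},\,\forall p_{i}\neq 0}m(\mathbf{p})$, so the whole problem reduces to counting, with multiplicity, the points of the affine variety $\mathcal{V}^{T}$ (defined by the polynomials $f_{v}$, $v\in V(T)\setminus\{w\}$, at $\lambda=0$) all of whose coordinates are nonzero. The base case $|E(T)|=1$ is settled by (\ref{shi2.9}): there the nonzero-coordinate points of $\mathcal{V}^{H'}$ contribute total multiplicity $k^{k-2}=k^{1\cdot(k-2)}$, matching the claimed formula.

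For the inductive step I would choose a pendant edge $\widehat{e}=\{v_{1},\dots,v_{k-1},w\}$ of $T$ with $w$ the (unique) non-pendant vertex on it, and set $\widetilde{T}=T-\{v_{1},\dots,v_{k-1}\}$, so that $|E(\widetilde{T})|=|E(T)|-1$ and $w\in V(\widetilde{T})$. Run the construction of $\mathcal{V}^{T}$ using $w$ as the distinguished vertex. Given $\mathbf{p}\in\mathcal{V}^{T}$ with all coordinates nonzero, the equations $f_{v}=0$ for $v\in V(\widetilde{T})\setminus\{w\}$ force the restriction of $\mathbf{p}$ to $V(\widetilde{T})\setminus\{w\}$ to lie in $\mathcal{V}^{\widetilde{T}}$ (again with all coordinates nonzero), exactly as in the non-cut-vertex analysis inside the proof of Lemma \ref{yinli4.1}. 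Conversely, given such a $\widetilde{\mathbf{p}}\in\mathcal{V}^{\widetilde{T}}$, the remaining equations $f_{v_{1}}=\dots=f_{v_{k-1}}=0$ determine the pendant coordinates $p_{v_{1}},\dots,p_{v_{k-1}}$: the computation carried out in the proof of Lemma \ref{yinli4.1} (the one that yields $\prod_{t}\mathbf{p}_{\widehat{e}\setminus\{v_{t},w\}}$ and hence $\mathbf{p}_{\widehat{e}\setminus\{w\}}$) shows that the product $\prod_{t=1}^{k-1}p_{v_{t}}^{k-2}$, equivalently $\prod_t p_{v_t}$, is pinned down as a fixed nonzero rational expression in the $\widetilde{\mathbf{p}}$-data, while each individual $p_{v_t}^{k-1}$ is a fixed nonzero multiple of $\prod_t p_{v_t}$; so the pendant block is a standard ``one-edge'' system whose nonzero solutions, counted with multiplicity, number exactly $k^{k-2}$ (the same count as in the base case / (\ref{shi2.9})). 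Multiplicativity of intersection multiplicities across the product decomposition $\mathcal{V}^{T}\cong\mathcal{V}^{\widetilde{T}}\times(\text{pendant fiber})$ then gives $n_{0}(T)=k^{k-2}\cdot n_{0}(\widetilde{T})=k^{k-2}\cdot k^{(|E(T)|-1)(k-2)}=k^{|E(T)|(k-2)}$, closing the induction.

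An alternative, perhaps cleaner, organization is to bypass $\mathcal{V}^{T}$ entirely and argue directly from Corollary \ref{tuilun2.5}: write $\phi(\mathcal{L}_{T})$ in the factored form given there, observe via Lemma \ref{yinli3.2} that $\phi(\mathcal{L}_{\widetilde{T}}(w))$ has no zero root and that the $(\lambda-1)$ factor contributes nothing at $\lambda=0$, and track the exponent of $\lambda$ in the two remaining products using (\ref{shi4.6}) (which identifies the relevant linear forms with $\varphi_{T}(T)/\varphi_{T}(T-w)$ and a pendant-modified analogue) together with Theorem \ref{dingli3.4} (zero is a simple root of $\varphi_{T}(T)$, not a root of $\varphi_{T}(\widetilde{T})$ for proper sub-hypertrees). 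This turns the multiplicity count into a bookkeeping exercise with the exponents $K_{1}=(k-1)^{k-1}-k^{k-2}$ and $K_{2}=k^{k-2}$: the $K_{1}$-product sees the linear form $\lambda-d_{\widetilde{T}}(w)-1+\sum_e \mathbf{p}_{e\setminus\{w\}}$, which at $\lambda=0$ equals (a shift of) $\varphi_{\widetilde{T}}(\widetilde{T})/\varphi_{\widetilde{T}}(\widetilde{T}-w)$ and so does \emph{not} vanish on the all-nonzero points, whereas the $K_{2}$-product sees the shifted form involving $(-1/(\lambda-1))^{k-1}$ which at $\lambda=0$ becomes exactly $f_{w}(\mathbf{p})=\varphi_{T}(T)/\varphi_{T}(T-w)$, vanishing to order one on each all-nonzero point; summing $m(\mathbf{p})K_{2}$ over those points and invoking the induction hypothesis $\sum_{\mathbf{p}} m(\mathbf{p})$ (appropriately interpreted for $\widetilde{T}$) again produces the factor $k^{k-2}$.

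The main obstacle is the bookkeeping of intersection multiplicities: one must be careful that ``$m(\mathbf{p})$'' in (\ref{shi4.7}) is the scheme-theoretic multiplicity of $\mathbf{p}$ in $\mathcal{V}^{T}$, and that under the product/fiber decomposition these multiplicities genuinely multiply (this is where Lemma \ref{yinli2.2} and the $\mathrm{Res}(\overline F_\bullet)\neq 0$ hypothesis of the Poisson formula must be verified for the hypertree in question, using Lemma \ref{yinli3.2} to guarantee the resultant of the leading forms is nonzero so that the Poisson formula applies cleanly at every stage). A secondary subtlety is confirming that no ``extra'' all-nonzero points of $\mathcal{V}^{T}$ appear beyond those coming from all-nonzero points of $\mathcal{V}^{\widetilde{T}}$ — i.e.\ that a nonzero pendant block cannot be attached to a $\widetilde{\mathbf{p}}$ having a zero coordinate and still yield an all-nonzero $\mathbf{p}$; this is immediate since $\widetilde{\mathbf{p}}$ is literally the restriction of $\mathbf{p}$. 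Once these points are pinned down, the arithmetic $k^{k-2}\cdot k^{(|E(T)|-1)(k-2)}=k^{|E(T)|(k-2)}$ is trivial, and the theorem — hence Conjecture \ref{caixiang1.1} — follows.
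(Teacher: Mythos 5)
Your proposal is correct and follows essentially the same route as the paper: induction on $|E(T)|$ via a pendant edge, reducing $n_{0}(T)$ to the all-nonzero points of $\mathcal{V}^{\widetilde{T}}$ through Corollary \ref{tuilun2.5}, Lemma \ref{yinli4.1}, the identity $f_{w}(\mathbf{p})=\varphi_{T}(T)/\varphi_{T}(T-w)$, and Theorem \ref{dingli3.4}, with the pendant block contributing the factor $k^{k-2}$. Indeed, the ``alternative organization'' you sketch in your third paragraph is precisely the paper's argument, and your primary fiber-product phrasing is an equivalent repackaging of it.
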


\begin{proof}

We prove the result by the induction on the number of edges of $T$.

When $|E(T)|=1$.
It is shown that the multiplicity of the zero Laplacian eigenvalue of $T$ is $k^{k-2}$ in the \cite[Theorem 4.9]{zheng2023zero}.
So the assertion holds.

Assuming that the result holds when $|E(T)|=r$, we consider the case $|E(T)|=r+1$.

Let $w$ be a non-pendent vertex on a pendant edge of $T$,
and $\widetilde{T}$ denote the $k$-uniform hypertree obtained by removing this pendant edge and pendent vertices on it from $T$.
By Corollary \ref{tuilun2.5},
the Laplacian characteristic polynomial of $T$ is
\begin{align}\label{shi4.8}\notag
\phi(\mathcal{L}_{T})=&(\lambda-1)^{(k-1)^{(r+1)(k-1)+1}}\phi(\mathcal{L}_{\widetilde{T}}(w))^{(k-1)^{k}}
\prod_{\mathbf{p}\in\mathcal{V}^{\widetilde{T}}}(\lambda-d_{T}(w)+\sum_{e\in E_{\widetilde{T}}(w)}\mathbf{p}_{e\setminus\{w\}})^{m(\mathbf{p})K_{1}}\\
&\times\prod_{\mathbf{p}\in\mathcal{V}^{\widetilde{T}}}(\lambda-d_{T}(w)+(\frac{-1}{\lambda-1})^{k-1}+\sum_{e\in E_{\widetilde{T}}(w)}\mathbf{p}_{e\setminus\{w\}})^{m(\mathbf{p})K_{2}},
\end{align}
where $K_{1}=(k-1)^{k-1}-k^{k-2}$ and $K_{2}=k^{k-2}$.

Clearly, $w$ is a cut vertex on $T$.
Suppose that the branches of $T$ associated with $w$ are $\widetilde{T}$ and a one-edge hypergraph, denoted by $T'$.
By (\ref{shi2.3}), we know that $\mathcal{V}^{T}=\mathcal{V}^{\widetilde{T}}\times\mathcal{V}^{T'}$.
Then we have $\mathbf{r}=\left(\begin{matrix} \mathbf{p} \\ \mathbf{q} \end{matrix}\right)$ for any $\mathbf{r}\in\mathcal{V}^{T}$,
where $\mathbf{p}\in\mathcal{V}^{\widetilde{T}}$, $\mathbf{q}\in\mathcal{V}^{T'}$.
It is known from (\ref{shi4.7}) that the multiplicity of the zero Laplacian eigenvalue of $T$ is only related to $\mathbf{r}\in\mathcal{V}^{T}$
having all components non-zero.
By (\ref{shi2.10}),
it implies that we only need to consider
\begin{align}\label{shi4.9}
\prod_{\mathbf{p}\in\mathcal{V}^{\widetilde{T}}}(\lambda-d_{T}(w)+(\frac{-1}{\lambda-1})^{k-1}+\sum_{e\in E_{\widetilde{T}}(w)}\mathbf{p}_{e\setminus\{w\}})^{m(\mathbf{p})K_{2}}
\end{align}
in (\ref{shi4.8}) and $\mathbf{p}$ have all components non-zero in $\mathcal{V}^{\widetilde{T}}$.

By Lemma \ref{yinli4.1},
for $\mathbf{p}\in\mathcal{V}^{\widetilde{T}}$ having all components non-zero, we have
\begin{align*}
&\lambda-d_{T}(w)+(\frac{-1}{\lambda-1})^{k-1}+\sum_{e\in E_{\widetilde{T}}(w)}\mathbf{p}_{e\setminus\{w\}}\\
=&\lambda-d_{T}(w)+(\frac{-1}{\lambda-1})^{k-1}+\sum_{e\in E_{\widetilde{T}}(w)}\frac{(-1)^{k-1}\varphi_{\widetilde{T}}(\widetilde{T}-V(e))}{\varphi_{\widetilde{T}}(\widetilde{T}-w)}.
\end{align*}
By the definition of the Laplacian matching polynomial,
we know that$\varphi_{\widetilde{T}}(\widetilde{T}-w)=\varphi_{T}(\widetilde{T}-w)$ and
$\varphi_{\widetilde{T}}(\widetilde{T}-V(e))=\varphi_{T}(\widetilde{T}-V(e))$ for each $e\in E_{\widetilde{T}}(w)$.
It follows from Theorem \ref{dingli3.1} (3) that
\begin{align}\label{shi4.10}\notag
&\lambda-d_{T}(w)+(\frac{-1}{\lambda-1})^{k-1}+\sum_{e\in E_{\widetilde{T}}(w)}\mathbf{p}_{e\setminus\{w\}}\\
=&\frac{(\lambda-1)^{k-1}\varphi_{T}(\widetilde{T})+(-1)^{k-1}\varphi_{T}(\widetilde{T}-w)}{(\lambda-1)^{k-1}\varphi_{T}(\widetilde{T}-w)}.
\end{align}
Let pendant edge $\widetilde{e}=\{v_{1},\ldots,v_{k-1},w\}$,
where $v_{1},\ldots,v_{k-1}$ are the pendent vertices.
Note that the Laplacian matching polynomial of $v_{i}$ with respect to $T$ is $\lambda-1$ for each $i\in[k-1]$.
Since the disjoint union of $\widetilde{T}-w$ and $v_{i}$ for all $i\in[k-1]$ is $T-w$,
by Theorem \ref{dingli3.1} (1), we have
\begin{align*}
(\lambda-1)^{k-1}\varphi_{T}(\widetilde{T}-w)=\varphi_{T}(T-w).
\end{align*}
Since $(\lambda-1)^{k-1}\varphi_{T}(\widetilde{T})+(-1)^{k-1}\varphi_{T}(\widetilde{T}-w)=(\lambda-d_{T}(v_{i}))\varphi_{T}(T-v_{i})+(-1)^{k-1}\varphi_{T}(T-V(\widetilde{e}))$ for any $i\in[k-1]$,
by Theorem \ref{dingli3.1} (3), we have
\begin{align*}
(\lambda-1)^{k-1}\varphi_{T}(\widetilde{T})+(-1)^{k-1}\varphi_{T}(\widetilde{T}-w)=\varphi_{T}(T).
\end{align*}
From (\ref{shi4.10}),  for $\mathbf{p}\in\mathcal{V}^{\widetilde{T}}$ having all components non-zero, we obtain
\begin{align*}
&\lambda-d_{T}(w)+(\frac{-1}{\lambda-1})^{k-1}+\sum_{e\in E_{\widetilde{T}}(w)}\mathbf{p}_{e\setminus\{w\}}\\
&=\frac{\varphi_{T}(T)}{\varphi_{T}(T-w)}.
\end{align*}
Note that $T-w$ is not connected and each connected component is a non-trivial sub-hypertree of $T$.
It is known from Theorem \ref{dingli3.1} (1) and Theorem \ref{dingli3.4}
that zero is not the root of $\varphi_{T}(T-w)$ and is a simple root of $\varphi_{T}(T)$.
By (\ref{shi4.9}), we get
\begin{align*}
n_{0}(T)=k^{k-2}\sum_{\substack{\mathbf{p}\in\mathcal{V}^{\widetilde{T}}\\\forall p_{i}\neq0}}m(\mathbf{p}).
\end{align*}
It follows from (\ref{shi4.7}) that
$\sum_{\substack{\mathbf{p}\in\mathcal{V}^{\widetilde{T}}\\\forall p_{i}\neq0}}m(\mathbf{p})=n_{0}(\widetilde{T})$.
By the induction hypothesis,
we have $n_{0}(\widetilde{T})=k^{r(k-2)}$.
Thus, $n_{0}(T)=k^{k-2}n_{0}(\widetilde{T})=k^{(r+1)(k-2)}$.

\end{proof}

\section*{References}
\bibliographystyle{plain}
\bibliography{atbib}
\end{spacing}
\end{document}